\documentclass{article}
\usepackage[utf8]{inputenc}
\usepackage{amsmath}
\allowdisplaybreaks[3]
\usepackage[round]{natbib}
\usepackage{authblk}
\usepackage{booktabs}
\usepackage{multirow}
\usepackage{amssymb}
\usepackage{graphicx}
\usepackage{pgfplots}
\usepackage{pgfplotstable} 
\usepackage{subfigure}
\usepackage{pifont}

\usepackage[ruled,linesnumbered]{algorithm2e}

\usepackage{amsthm}
\newtheorem{theorem}{Theorem}
\newtheorem{lemma}{Lemma}
\newtheorem{proposition}{Proposition}
\newtheorem{definition}{Definition}

\usepackage{multirow}
\usepackage{enumitem}

\usepackage[skip=0.1\baselineskip]{caption}

\usepackage[title]{appendix}
\usepackage{hyperref}

\title{The optimality of an $(s, S)$ hiring policy on a workforce planning problem with fixed recruitment costs and binomial turnover}
\author[a]{Zhen Chen\thanks{Corresponding author: chen.zhen5526@gmail.com}}
\author[b]{Roberto Rossi\thanks{roberto.rossi@ed.ac.uk}}
\author[b]{Belen Martin-Barragan\thanks{belen.martin@ed.ac.uk}}
\author[c]{S. Armagan Tarim\thanks{armtar@yahoo.com}}
\affil[a]{Business School, Brunel University London, Uxbridge UB8 3PH,  United Kingdom}
\affil[b]{Business School, University of Edinburgh, Edinburgh EH8 9JS, United Kingdom}
\affil[c]{ Institute Of Informatics, Hacettepe University, Ankara 06800, Turkey}

\date{}
\pgfplotsset{compat=1.18} 

\usepackage[margin=1in]{geometry}   
\usepackage{setspace}               
\begin{document}

\maketitle

\begin{abstract}
We study a finite-horizon workforce planning problem in which staff turnover in each period follows a binomial distribution whose parameters depend on the post-hiring workforce level. The model incorporates a fixed hiring cost that is incurred whenever recruitment occurs, regardless of the number of employees hired. The objective is to minimise the expected total cost, including recruitment, salary, and shortage costs, where deviations below period-specific staffing requirements are penalised.
To analyse this stochastic dynamic programme with decision-dependent transition probabilities, we establish the discrete convexity of the variable single-period cost (the sum of expected salary and penalty costs) and the $K$-convexity of the expected total cost. Specifically, we introduce the concept of Binomial-$K$-convexity to facilitate the proof that $K$-convexity is preserved under Binomial propagation in the Bellman function. We then show that the optimal hiring policy exhibits an $(s, S)$-type structure: when the workforce level in a given period falls below a threshold $s$, staff are hired up to level $S$; otherwise, no hiring occurs. Furthermore, we develop a piecewise approximation approach that yields a mixed-integer linear programming (MILP) formulation for solving the problem and computing the $(s, S)$ parameters for each period. Numerical results demonstrate that the proposed method achieves fast computation with small optimality gaps.

\end{abstract}

\noindent\textbf{Keywords:} Workforce Planning; Stochastic Turnover; (s, S) policy; K-convexity

\section{Introduction}

Effective workforce planning aims to ensure that ``the right people with the right competences are in the right jobs at the right time'' \citep{taylor2005people}. In practice, however, organisations often alternate between shortage and excess. On Monday, 25 July 2022, the Guardian front page featured the headline: ```Greatest staffing crisis' in NHS history leaves patients at risk'' \citep{Denis2022}; only three years later, thousands of staff in NHS England faced job losses as a result of financial pressures and cost-cutting measures \citep{triggle2025}. Such episodes illustrate a central difficulty of workforce planning: organisations must choose staffing levels before future demand, turnover, and budgetary conditions are fully known, while later corrections through overtime, outsourcing, temporary labour, or layoffs may be costly and disruptive.

Staff turnover is a particularly important source of uncertainty. A large literature in management and applied psychology shows that employee turnover varies substantially across occupations, organisations, and labour-market contexts, and that it has important implications for staffing, productivity, and organisational performance \citep{Hausknecht2010,Park2013,hom2017one}. Turnover is also costly: beyond the direct costs of recruiting, selecting, and training replacements, organisations may incur productivity losses, disruption to teams, and loss of firm-specific human capital \citep{Cascio2006,Tracey2008,Hancock2011}. In workforce planning literature, such exits are often referred to as wastage, a key driver of recruitment and promotion decisions \citep{DeFeyter2016,Komarudin2015}.

A distinctive feature of workforce planning is that, unlike physical goods, service capacity cannot be stored. Nevertheless, employees can play a role analogous to inventory in service systems: maintaining a workforce buffer reduces shortage risk but increases salary costs, whereas hiring too late may lead to service failures or expensive recourse. Early research already highlighted this trade-off by comparing stable workforce pools with hire-and-fire policies under fluctuating requirements \citep{Conrath1971}. Similar tensions arise in fire departments, hospitals, call centres, public agencies, and military organisations, where staffing decisions must be made before employee availability and workload are fully realised \citep{Fry2006,Green2013}.

Whenever an organisation hires staff, it may incur costs that are independent of the number of employees hired, including administration costs, recruitment-agency fees, vacancy advertisements, headhunting, screening, training set-up, and medical examinations \citep{hamermesh1989labor}. Additional fixed costs may be implicit: adding new staff may disrupt existing teams, affect productivity, or impose managerial effort \citep{Navarra2019}. If these costs are substantial, continuous just-in-time hiring may be suboptimal and batch recruitment may be preferable. 

Our work builds on the inventory control tradition. A seminal result in this field is the optimality of the $(s,S)$ policy for the dynamic inventory problem \citep{Scarf1960}: one orders up to $S$ when inventory falls below $s$, and does not order otherwise. Empirical evidence from \cite{caballero1997aggregate} shows that employment adjustment is often lumpy and discontinuous. This behaviour is naturally reminiscent of an $(s,S)$ control policy: organisations do not continuously adjust headcount, but rather initiate recruitment campaigns only when staffing falls sufficiently low. \citeauthor{Scarf1960}'s proof relies on $K$-convexity of the cost-to-go function. However, his argument does not apply directly to workforce planning. In classical inventory models, demand is exogenous and independent of the order-up-to level. In our setting, turnover is decision-dependent: after hiring up to level $y$, the number of employees who leave during the period follows a binomial distribution with $y$ trials, so the transition kernel itself changes with the hiring decision. Increasing the workforce simultaneously expands service capacity and enlarges the population exposed to turnover risk. This feature resembles inventory models with stock-dependent demand or deterioration, where the random depletion of stock depends on the inventory level. In such models, the classical $K$-convexity arguments used to prove optimality of $(s,S)$ policies do not apply automatically, and may fail without additional structure. Our analysis shows that, for binomial turnover, this difficulty can be overcome: although the transition is decision-dependent, it preserves a suitable form of $K$-convexity.


This paper makes the following contributions.

\begin{itemize}
    \item We formulate a finite-horizon stochastic workforce planning problem with fixed recruitment costs and binomial staff turnover. Unlike standard inventory models, the turnover distribution is decision-dependent because the number of potential leavers depends on the post-hiring workforce level.

    \item We prove the discrete convexity of the single-period expected salary and shortage-penalty cost. This property is nontrivial because the underlying binomial random variable depends on the staffing decision.

\item We introduce Binomial-\(K\)-convexity and use it to show that \(K\)-convexity is preserved under the binomial propagation induced by workforce turnover. This is the key technical step: although the transition probability depends on the post-hiring workforce level, the binomial structure preserves enough convexity to recover an \((s,S)\) optimality result.

    \item We prove that the optimal hiring policy has an \((s,S)\) structure in every period. This result provides a rigorous operational justification for lumpy hiring under stochastic turnover and fixed recruitment costs.


    \item We develop a tractable mixed-integer linear programming approximation for computing the policy parameters and show computationally that it produces small optimality gaps with short solution times.
\end{itemize}

The rest of this work is structured as follows. Section \ref{sec:literature} reviews the related literature. Section \ref{sec:problem-setting} describes the problem setting and formulates a stochastic dynamic programming model. Section \ref{sec:math-property} introduces the Binomial-\(K\)-convexity and  proves the optimal policy of $(s, S)$. Section \ref{sec:computation-milp} presents a MILP model for computing the policy parameters. A computational study and its results are detailed in Section \ref{sec:numerical-tests}. Finally, Section \ref{sec:conclusion} draws conclusions and outlines future research directions.

\section{Literature Review}\label{sec:literature}

Our paper is related to four streams of literature: inventory-inspired workforce planning, strategic workforce planning models, operational staffing and scheduling under uncertainty, and models with fixed adjustment costs.

\textit{Inventory-inspired workforce planning.}
There is a close connection between inventory control and workforce planning \citep[see e.g.,][]{gans2002managing}. The analogy is that employees provide capacity in the same way that inventory provides availability: too little capacity causes shortages, whereas too much capacity generates holding or salary costs. Early work on workforce pooling used this analogy to compare stable workforce pools with hire-and-fire policies under fluctuating requirements \citep{Conrath1971}. \cite{gans2002managing} develop a Markov decision process for employee staffing with stochastic learning and turnover and show that, when fixed hiring costs are absent, the optimal policy is a state-dependent hire-up-to policy, analogous to a base-stock policy in inventory control. A related discrete-worker model is studied by \cite{ahn2005staffing}, who consider staffing decisions for heterogeneous workers with turnover and use supermodularity arguments to characterise structural properties of optimal policies. Our work differs in two key respects: we introduce a fixed recruitment cost, and we prove that the optimal policy becomes an $(s,S)$ policy rather than a base-stock policy.

The decision-dependent transition in our model is related to inventory models with stock-dependent demand, deterioration, or spoilage. In these settings, inventory can be depleted by a mechanism whose distribution depends on the inventory level, an analogy already noted in workforce models with turnover \citep{iglehart1969multi,gans2002managing}. More recent inventory models study demand distributions that depend on displayed or ending inventory levels, leading to state-dependent stocking policies in newsvendor and periodic-review settings \citep{balakrishnan2008integrating,sapra2010much,yang2014dynamic,xue2017managing}. These papers are relevant because they relax the classical assumption that demand is independent of stock. However, they do not address fixed recruitment costs or workforce turnover, and their structural arguments do not directly yield an $(s,S)$ hiring policy under binomial decision-dependent wastage. In particular, stock-dependent demand undermines the $K$-convexity structure used in classical inventory theory. Conversely, our binomial turnover model creates a specific form of endogeneity --- between the post-hiring workforce level and the number of employees who leave --- for which we are able to prove that an appropriate $K$-convexity property is preserved.

A related but technically more limited analogy is with maintenance and repair systems: employee turnover corresponds to failures, while hiring corresponds to repair or replacement. This analogy is useful at a high level but limited technically. Maintenance models usually require tracking the age, condition, or lifetime distribution of individual machines, and failure distributions are often non-binomial. A binomial transition would be appropriate only under a restrictive memoryless one-period failure model with statistically identical units. Our workforce model deliberately abstracts from individual tenure or age in the base case and instead studies the aggregate headcount dynamics induced by binomial turnover. This aggregate structure is precisely what enables the Binomial-\(K\)-convexity argument developed in this paper. This abstraction is also practically relevant: individual-level workforce models require detailed employee histories, such as tenure, age, performance, and career trajectories, which may be difficult to obtain, sensitive from an HR and privacy perspective, and costly to maintain. Aggregate headcount models therefore offer a more parsimonious and operationally acceptable representation for strategic workforce planning.

\textit{Strategic workforce planning models.}
A broader strategic workforce planning literature studies recruitment, promotion, training, retention, attrition, and scheduling decisions. \cite{Holt1960-xc}'s workforce planning model and its linear hiring rules are an early foundation. Recent surveys emphasize the diversity of modelling approaches --- including Markov models, mathematical programming, simulation, system dynamics, and empirical methods --- and classify workforce systems in terms of recruitment, attrition, promotion, training, retention, and scheduling \citep{Peck2025}. Much of this literature focuses on workforce heterogeneity and skill allocation, considering attributes such as age \citep{valls2009skilled}, experience \citep{firat2012improved}, technical knowledge \citep{corominas2012detailed}, licences \citep{krishnamoorthy2012algorithms}, nurse grades \citep{maenhout2013integrated}, and military ranks \citep{turan2021multi}. \cite{de2015workforce} provide a detailed review of workforce planning models incorporating skills. Other work jointly optimises workforce planning with production \citep{singhal1992noniterative}, capacity planning \citep{song2008successive}, patient and personnel scheduling \citep{koeleman2012optimal}, shift scheduling \citep{kim2015two}, fleet renewal \citep{turan2022joint}, and crowdsourced delivery \citep{cheng2025robust}. These models capture important operational and organisational details, but they typically do not derive structural hiring policies under stochastic turnover and fixed recruitment costs.

Several recent strategic workforce models are especially relevant. \cite{hu2016strategic} formulate an infinite-horizon linear programming model for health workforce planning with training, hiring, promotion, attrition, and supervision constraints. Under common-sense assumptions, they prove optimality of a lookahead policy and show that such policies perform well in deterministic and stochastic demand-growth experiments. Their model uses deterministic retention rates in the core formulation and focuses on lookahead optimality rather than threshold-type hiring. \cite{jaillet2022strategic} propose a robust workforce planning model with uncertain attrition, time-in-grade, hiring, dismissal, and promotion decisions. Their approach provides probabilistic and robustness guarantees and leads to a tractable conic optimisation formulation. In contrast, our model is a stochastic dynamic program with binomial turnover and fixed recruitment costs, and our main result is the structural optimality of an $(s,S)$ hiring rule. \cite{luy2024strategic} study hybrid fleets in crowdsourced delivery, where salaried drivers coexist with uncertain crowdsourced-driver supply. They formulate a Markov decision process and use approximate dynamic programming, exploiting convexity along the salaried-driver dimension. Their setting is operationally rich and includes stochastic driver joining and resignation, but the focus is approximate computation for hybrid delivery fleets rather than exact structural characterisation of a fixed-cost hiring policy.

Markov workforce planning provides another important foundation. These models describe personnel movement among grades, departments, or subgroups through recruitment, promotion, internal transitions, and wastage. Classical models study attainability and maintainability of desired personnel structures, while more recent variants incorporate stochastic wastage, uncertain internal flows, and cost-effectiveness objectives \citep{Guerry2012,DeFeyter2016,Komarudin2015}. For example, \cite{DeFeyter2016} develop stochastic Markov workforce models under recruitment control and use scenario-based mixed-integer programming to balance recruitment cost against the desirability of the resulting personnel structure. \cite{Komarudin2015} study trade-offs between attaining a desired structure and maintaining steady promotions when wastage is stochastic. These studies are valuable for modelling personnel structures, but their emphasis is on feasibility, desirability, and recruitment vectors, not on proving an $(s,S)$ form for the optimal dynamic hiring decision.

Stochastic programming offers a complementary approach to workforce planning under uncertainty. \cite{Zhu2009} formulate a two-stage stochastic mixed-integer program for multi-category workforce planning under uncertain and fluctuating demand. First-stage decisions determine recruitment and allocation, while second-stage recourse decisions reassign workforce demand among service centres, shifts, or months. Their model accommodates rich operational constraints and uses Benders' decomposition to improve tractability. Our model is less detailed operationally, but it provides a structural result.

\textit{Operational staffing and scheduling under uncertainty.}
A further related stream studies operational staffing and scheduling under uncertain availability. In firefighter staffing, annual staffing decisions must account for temporary absences, permanent wastage, limited hiring opportunities, and minimum staffing requirements; the resulting model resembles a newsvendor problem with decision-dependent uncertainty \citep{Fry2006}. In nurse staffing, absenteeism may be endogenous: nurses are more likely to be absent when anticipated workload is high, creating a feedback between staffing decisions and realised availability \citep{Green2013}. Integrated nurse staffing and scheduling models for intensive-care units similarly account for uncertain patient census, acuity, and admission-discharge-transfer activity, and show that dynamic staffing policies can reduce understaffing risk relative to fixed targets \citep{Aydas2020}. These studies share our concern with uncertain workforce availability, but they primarily focus on staffing levels, schedules, or service-level performance rather than on finite-horizon hiring with fixed recruitment costs.

\textit{Fixed adjustment costs.}
Finally, fixed recruitment costs are central to our work. Deterministic dynamic programming models have considered recruitment policies with fixed costs \citep{rao1990dynamic}, and early workforce pooling models recognised adjustment costs in comparing stable and hire-fire policies \citep{Conrath1971}. However, we are not aware of prior work that proves the optimality of an $(s,S)$ hiring policy in a finite-horizon stochastic workforce planning model with fixed recruitment costs and binomial turnover whose distribution depends on the post-hiring workforce level.

\section{Problem setting}\label{sec:problem-setting}


Throughout this paper, $\mathbb{E}[\cdot]$ denotes the expectation operator, with all expectations assumed to be finite. When necessary, a subscript is added to $\mathbb{E}[\cdot]$ to indicate the random variables with respect to which the expectation is taken. For simplicity, we may omit the subscript $t$ in some parameters, reinstating it only when required. We use $\mathbb{R}$ for the set of real numbers, $\mathbb{Z}$ for the set of integers and $\mathbb{N}$ for the set of natural integers including zero. The abbreviations PMF and CDF refer to the probability mass function and cumulative distribution function, respectively.



\subsection{Problem description}

We consider an organization facing uncertain staff turnover over a discrete finite horizon comprising $T$ periods.
In period $t$, the minimum staffing level that is required to run operations is $W_t$. We assume staff members make independent resignation decisions; in period $t$, a staff member may resign with probability $p_t$. 

Recruitment of new staff entails a fixed cost $K$ that is independent of the number of people hired, and a variable cost $c$ is paid for recruiting each new staff member. We consider recruitment costs in a general way, including advertisement, screening, and interviewing, but also any possible training cost or even negative economic effects of the adaptation period for the new hires. The organization initially employs $x_0$ staff. In each period, the salary of a staff member is $h$. If the staffing level in a given period is lower than $W_t$, a penalty cost $\pi$ for overtime and/or subcontracting is incurred by the organization for each staff unit short per period. 

At the beginning of each period, the organization decides the number $q_t$ of people to recruit. The objective of the firm is to minimize the expected total cost, which includes fixed costs, variable costs, salary costs, and penalty costs over a planning horizon of length $T$. Let $x_{t-1}$ denote the staff count at the beginning of period $t$, and let $y_t$ denote the number of staff immediately after recruitment in period $t$, that is $y_t\triangleq x_{t-1}+q_t$.

Let random variable $\omega^y_t$ denote the number of staff turnovers in period $t$. Since each staff member may resign with probability $p_t$ and resignations happen independently of each other, random variable $\omega^y_t$ follows a binomial distribution with $y_t$ trials and success probability $p_t$. The PMF of $\omega^y_{t}$ is therefore
\[
f^y_t(k)\triangleq\binom{y_t}{k}p_{t}^{k}(1-p_t)^{y_t-k}.
\]

Staff turnover $\omega^y_{t}$ hence depends upon available staff after recruitment $y_t$.\footnote{The superscript ${}^y$ emphasizes this dependency.}
The staffing level conservation constraint can be expressed as
\begin{equation}
x_{t}=y_t-\omega^y_t.\label{eq:staff-flow}
\end{equation}

Let $L(y)$ denote the expected total salary cost and penalty cost in period $t$ given available staff number $y$. 
The expression of $L(y)$ is given by Eq. \eqref{eq:Ly}, in which  $(x)^+\triangleq\max(x,0)$. 


\begin{align}
    L(y)\triangleq&\, h\sum_{k=0}^{y}(y-k)f^y(k)+\pi\sum_{k=(y-W)^+}^{y}(k+W-y)f^y(k)\nonumber\\
    = &\, h \mathbb{E}[y-\omega^y] + \pi \mathbb{E}[\omega^y+W-y]^+\\
    = &\, h y(1-p) + \pi \mathbb{E}[\omega^y+W-y]^+.\label{eq:Ly}
\end{align}

The first term in the right side of $L(y)$ captures the expected salary cost, while the second term captures the expected outsourcing/penalty cost. The dependency of the random staff turnover ($\omega^y$) on the available staff after recruitment ($y$) sets this function apart from the traditional first-order loss function in the inventory management problems.

\subsection{Stochastic dynamic programming modeling}

We next formulate the problem as a stochastic dynamic program.

\vspace{0.5em}
\noindent
\textbf{States.} The state at the beginning of period $t$ is the staffing level $x_{t-1}$. 

\vspace{0.5em}
\noindent
\textbf{Actions.} The action in period $t$ is the recruit-up-to level $y_t$.\footnote{Or equivalently, the recruiting quantity $q_t=y_t-x_{t-1}$.}

\vspace{0.5em}
\noindent
\textbf{State transition function.} The state transition function is given by Eq. \eqref{eq:staff-flow}.

\vspace{0.5em}
\noindent
\textbf{Immediate cost.} The immediate cost for period $t$ is $\delta(y-x)+L(y)$, where $\delta(\cdot)$ is defined as
\begin{equation}
\delta(z)=
\begin{cases}
K+cz\quad & z>0\\
0 & z=0.
\end{cases}\label{eq:hiring-cost}
\end{equation}

\noindent
\textbf{Bellman's equation.} Define $C_{t}(x)$ as the minimum expected total cost incurred over periods $t, t+1,\dots, T$, when the initial staffing level is $x_{t-1}$. Bellman's equation for this problem is then
\begin{equation}
C_{t}(x)=\min\limits_{y\geq x }\Big\{\delta(y-x)+L(y)+\mathbb{E}[C_{t+1}(y-\omega^y)]\Big\}\label{eqn:c(x)}
\end{equation}
with boundary condition $C_{T+1}(x)\triangleq 0$. 

As in \citep{Scarf1960}, we introduce the function 
\begin{equation}
    G_t(y)\triangleq cy+L(y)+\mathbb{E}[C_{t+1}(y-\omega^y)]\label{eq:Gy}
\end{equation}
which will be used to develop our analysis.






\section{Mathematical properties}\label{sec:math-property}

In this section, we first establish the discrete convexity of $L(y)$. We then introduce the concept of $K$-convexity and characterize its properties in our problem featuring Binomial decision-dependent propagations. Building on these results, we formally prove that $G(y)$ is $K$-convex and establish the optimality of the $(s, S)$ policy.





\subsection{Discrete convexity of $L(y)$}

For ease of expression, we remove the term $h y(1-p)$ from $L(y)$ in Eq. \eqref{eq:Ly}, which is linear in $y$; and the constant $\pi$, which is assumed positive. The resulting function is 
\begin{equation}
    g(y)\triangleq\sum_{k=(y-W)^+}^{y}(k-y+W)\binom{y}{k}p^{k}(1-p)^{y-k}.
\end{equation}

We next introduce the concept of discrete convexity and prove that $g(y)$ and $L(y)$ are discrete convex.


\begin{definition}
A function $g:\mathbb{Z}\rightarrow \mathbb{R}$ is discrete convex if
\begin{equation}
    g(x-1)+g(x+1)\geq 2g(x)\qquad \forall x\in\mathbb{Z}.\nonumber
\end{equation}
\end{definition}

The following lemma can be easily verified.
\begin{lemma}\label{lemma:discrete-connvex}
A non-negative weighted sum of discrete convex functions is discrete convex.
\end{lemma}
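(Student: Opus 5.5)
The plan is to verify the defining inequality directly, using only linearity of the second difference. Take discrete convex functions $g_1,\dots,g_n:\mathbb{Z}\to\mathbb{R}$ and weights $a_1,\dots,a_n\geq 0$, and set $F(x)\triangleq\sum_{i=1}^n a_i g_i(x)$. For each $i$ write the second difference
\[
\Delta^2 g_i(x)\triangleq g_i(x-1)+g_i(x+1)-2g_i(x).
\]
By the definition of discrete convexity, $\Delta^2 g_i(x)\geq 0$ for every $x\in\mathbb{Z}$.

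The key step is that $\Delta^2$ is a linear operator: it is a fixed finite linear combination of point evaluations. Hence, for every $x\in\mathbb{Z}$,
\[
F(x-1)+F(x+1)-2F(x)=\sum_{i=1}^n a_i\,\Delta^2 g_i(x).
\]
Each summand is a product of two nonnegative numbers, so the sum is nonnegative. This gives $F(x-1)+F(x+1)\geq 2F(x)$ for all $x$, i.e.\ $F$ is discrete convex.

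I would also record two remarks, because the lemma will be applied to $L(y)=hy(1-p)+\pi g(y)$ and later to expectations over $\omega^y$. First, affine functions have zero second difference, so adding the linear term $hy(1-p)$ preserves discrete convexity. Second, the argument extends verbatim to countable nonnegative combinations $\sum_i a_i g_i$, provided the series converge pointwise at $x-1,x,x+1$, since limits of nonnegative partial sums remain nonnegative.

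There is no genuine obstacle. The only point needing care is this convergence caveat in the infinite case. It is harmless here because all expectations are assumed finite and binomial sums are finite.
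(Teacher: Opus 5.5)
Your proof is correct and is exactly the routine verification the paper has in mind; the paper gives no proof, saying only that the lemma ``can be easily verified.'' The second difference is linear, so the second difference of the weighted sum is a nonnegative combination of nonnegative terms. Only the finite-sum case is needed, because the lemma is applied to $H(m)$, which is a finite expectation over the binomial support.
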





\begin{lemma}
$g(y)$ and $L(y)$ are discrete convex.\label{lemma:discrete-convex}
\end{lemma}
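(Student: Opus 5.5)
Write $Z_y\triangleq y-\omega^y$ for the number of staff who remain when $y$ are employed after recruitment. Then $Z_y\sim\mathrm{Bin}(y,1-p)$, and
\[
g(y)=\mathbb{E}\big[(W-Z_y)^+\big].
\]
The plan is to use a coupling. Let $B_1,B_2,\dots$ be i.i.d.\ Bernoulli$(1-p)$ variables and set $Z_y=\sum_{i=1}^y B_i$. Then $Z_{y+1}=Z_y+B_{y+1}$ and $Z_{y+2}=Z_y+B_{y+1}+B_{y+2}$. Since the function $\phi(z)\triangleq (W-z)^+$ is convex on $\mathbb{Z}$, the aim is to prove
\[
g(y)+g(y+2)-2g(y+1)\ \ge 0\qquad\text{for all } y\ge 0,
\]
which is discrete convexity on the natural domain. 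For $y<0$ the function $g$ is not meaningful. I would take the domain of the definition to be $\mathbb{N}$ and check only interior points $x\ge 1$.

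\textbf{Key step.} Condition on $Z_y=z$ and write $q=1-p$. The second difference becomes
\[
\mathbb{E}\Big[\phi(z)-2\big(q\phi(z+1)+p\phi(z)\big)+\big(q^2\phi(z+2)+2pq\phi(z+1)+p^2\phi(z)\big)\Big].
\]
Simplifying the bracket gives
\[
q^2\big(\phi(z)-2\phi(z+1)+\phi(z+2)\big),
\]
because the coefficients satisfy $1-2p+p^2=q^2$ and $-2q+2pq=-2q^2$. This quantity is nonnegative by the convexity of $\phi$. In fact it equals $q^2$ exactly when $z=W-1$ and is $0$ otherwise, so
\[
g(y)+g(y+2)-2g(y+1)=q^2\,\Pr(Z_y=W-1)\ge 0.
\]
Taking expectations over $Z_y$ gives discrete convexity of $g$.

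\textbf{Extension to $L$.} We have $L(y)=hy(1-p)+\pi g(y)$. A linear function has zero second difference, so it is discrete convex, and $\pi>0$. Lemma~\ref{lemma:discrete-connvex} then gives that $L$ is discrete convex.

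\textbf{Main obstacle.} The difficulty is that the distribution of $\omega^y$ depends on $y$, so the usual argument that "a convex function of $y$ minus a fixed random variable is convex" does not apply directly. The coupling through partial sums of Bernoullis is what handles this. The part needing the most care is the algebra that collapses the two-step increment into $q^2$ times a second difference of $\phi$. If the coupling presentation were judged insufficiently explicit, the fallback is to verify the same identity directly on the PMF, using Pascal's rule $\binom{y+1}{k}=\binom{y}{k}+\binom{y}{k-1}$ to express $f^{y+1}$ and $f^{y+2}$ in terms of $f^y$.
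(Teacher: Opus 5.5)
Your proof is correct and is essentially the paper's argument. The paper also writes $\omega^{y+1}=\omega^y+\beta$ with an independent Bernoulli$(p)$ variable. It reduces the second difference to $(1-p)^2\bigl[H(0)+H(2)-2H(1)\bigr]$ with $H(m)=\mathbb{E}[\omega^y-y-m+W]^+=\mathbb{E}[\phi(Z_y+m)]$, which is exactly your $q^2\,\mathbb{E}\bigl[\phi(Z_y)-2\phi(Z_y+1)+\phi(Z_y+2)\bigr]$. Your closed form $q^2\Pr(Z_y=W-1)$, valid for integer $W$, is a small sharpening that the paper does not state.
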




\begin{proof}

Clearly, $L(y)$ is discrete convex if $g(y)$ is discrete convex. We now prove the discrete convexity of $g(y)$. Observe that
\[g(y)=\mathbb{E}[\omega^y-y+W]^+.\]

Recall that there is a key difference between this function and the first order loss function in inventory control, which is known to be convex. In our case, the random variable $\omega^y$ varies with $y$; hence, convexity is not evident in general. Let $\omega^{y+1}=\omega^y+\beta$, where $\beta$ is a Bernoulli random variable
with success probability $p$, independent of $\omega^y$. Then
\begin{align*}
g(y+1)
&= \mathbb{E}\bigl[\omega^{y}+\beta-y-1+W\bigr]^+ \nonumber\\
&= p\,\mathbb{E}\bigl[\omega^{y}-y+W\bigr]^+
  + (1-p)\,\mathbb{E}\bigl[\omega^{y}-y-1+W\bigr]^+. \label{eq:gyplus1}
\end{align*}

Consequently,
\[
g(y+1)-g(y)
= (p-1)H(0)+(1-p)H(1)=(1-p)\left(H(1)-H(0)\right),
\]
where $H(m)\triangleq\mathbb{E}[\omega^{y}-y-m+W]^+$.

Applying the same argument once more yields
\[
g(y+2)-g(y+1)
= p(p-1)H(0)+(1-p)(2p-1)H(1)+(1-p)^2H(2).
\]
Therefore,
\begin{align*}
[g(y+2)-g(y+1)]-[g(y+1)-g(y)]
&=(1-p)^2\bigl[H(0)+H(2)-2H(1)\bigr]. \label{eq:second-diff}
\end{align*}
Hence, to establish the discrete convexity of $g(y)$, it suffices to show that
\[
H(0)+H(2)-2H(1)\ge 0.
\]

Recall that $H(m)$ is analogous to the inventory first-order loss function with an order-up-to level of $y+m-W$, which is well known to be convex in the continuous setting. In our case, $[\omega^y - y - m + W]^+ = \max\{0, \,\omega^y - y - m + W\}$ is a discrete convex function of $m$ on $\mathbb{Z}$. Since $H(m)$ is a non-negative weighted sum of such discrete convex functions, it is itself discretely convex by Lemma~\ref{lemma:discrete-connvex}. By the definition of discrete convexity, it then follows that $g(y)$ is discretely convex in $y$.
\end{proof}

A number of existing works have suggested in the past that convexity of $g(y)$ may be established by leveraging total positivity \citep{karlin1968total}, or super-modularity \citep{ahn2005staffing}. However, none of these works provide a complete proof. Our proof, which leverages a Bernoulli trial decomposition of $g(y)$ and a transformation involving function $H(\cdot)$, is intuitive and direct. 

The above lemma is closely related to the notion of \emph{binomial thinning}, which is widely used in the statistical literature \citep[see, e.g.,][]{weiss2008thinning}. For a non-negative integer \(m\), binomial thinning with parameter \(p\in[0,1]\) independently retains each of the \(m\) units with probability \(p\). Equivalently, if \(\{\xi_j\}_{j\ge1}\) are i.i.d. Bernoulli\((p)\) random variables, then the thinned count is 
\[ Z_m=\sum_{j=1}^{m}\xi_j, 
\] and hence \(Z_m\sim \operatorname{Binomial}(m,p)\). In Appendix~\ref{app:binomial-thinning}, we generalise Lemma~\ref{lemma:discrete-convex} by showing that discrete convexity is preserved under binomial thinning for an arbitrary discrete convex function.

\subsection{$K$-convexity of $G(y)$ and optimality of $(s, S)$}

We modify the definition of $K$ convexity introduced by \cite{Scarf1960} on natural numbers is as follows.

\begin{definition}
A function $f:\mathbb{N}\rightarrow \mathbb{R}$ is $K$-convex if for all $x$, $a$ and $b$,
\begin{equation}
    b\left(K+f(x+a)-f(x)\right)\geq a\left(f(x)-f(x-b)\right).
\end{equation}\label{def:Kconvex}
\end{definition}

For $K$-convex functions, there are some properties given by the following lemma, which are crucial for the optimality of $(s, S)$ policy. 

\begin{lemma}\label{lemma:K_properties}
\begin{enumerate}[label=\normalfont{({\alph*})} ]

\item\label{item-Kconvexity:a} A convex function $f$ is also 0-convex and hence also $K$-convex for all $K\geq 0$.

\item\label{item-Kconvexity:b} If $f$ is $K$-convex and $g$ is $L$-convex, then for any scalars $\alpha, \beta \ge 0$, the function $\alpha f + \beta g$ is $(\alpha K + \beta L)$-convex.

\item\label{item-Kconvexity:c} Let $f$ be $K$-convex and $\omega^y \sim \mathrm{Binomial}(y, p)$. If $\mathbb{E}[|f(y - \omega^y)|] < \infty$, then the function $\mathbb{E}[f(y - \omega^y)]$ is $K$-convex.


\item\label{item-Kconvexity:d} If $f$ is a $K$-convex function and $f(x)\rightarrow \infty$ for $x\rightarrow \infty$, there exist scalars $s$ and $S$, defined as follows
\begin{align}
    S&=\arg\min_{x\in\mathbb{N}} f(x),\notag\\
    s&=\min\{x\in\mathbb{N}\mid  f(x)\leq f(S)+K, x\leq S\}\notag,
\end{align}
such that
\begin{enumerate}[label=\normalfont{(\roman*)} ]
\item $f(S)\leq f(x)$ for all  $x$;
\item $f(x)$ is strictly decreasing on the integers in [0, s);
\item $f(x)\leq f(z)+K$ for all $x$, $z$ with $s\leq x\leq z$.
\end{enumerate}
\end{enumerate}\label{lemma:Kconvex}
\end{lemma}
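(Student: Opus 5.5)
The plan is to treat parts \ref{item-Kconvexity:a}, \ref{item-Kconvexity:b} and \ref{item-Kconvexity:d} as the classical facts of \cite{Scarf1960}, adapted to $\mathbb{N}$, and to concentrate on part \ref{item-Kconvexity:c}, the Binomial propagation. That is the only place where the decision-dependent transition enters.

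\textbf{Parts \ref{item-Kconvexity:a} and \ref{item-Kconvexity:b}.} For part \ref{item-Kconvexity:a}, discrete convexity makes the forward differences $f(z+1)-f(z)$ nondecreasing. The average slope $(f(x+a)-f(x))/a$ therefore dominates $(f(x)-f(x-b))/b$, which is exactly Definition~\ref{def:Kconvex} with $K=0$. Adding $bK\ge 0$ to the left-hand side then gives $K$-convexity for every $K\ge 0$. Part \ref{item-Kconvexity:b} follows by multiplying the two defining inequalities by $\alpha,\beta\ge 0$ and adding them.

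\textbf{Part \ref{item-Kconvexity:d}.} Since $f(x)\to\infty$ on $\mathbb{N}$, a minimiser $S$ exists, and $s$ is well defined because $S$ itself belongs to the defining set; this gives (i). For (ii), take $x_1<x_2<s\le S$ with $f(x_1)\le f(x_2)$. Since $x_2<s$ we have $f(x_2)>f(S)+K$, and so $f(x_1)>f(S)+K$ as well. Applying Definition~\ref{def:Kconvex} at the point $x_2$ with $b=x_2-x_1$ and $a=S-x_2$ gives
\[
b(K+f(S)-f(x_2))\ge a(f(x_2)-f(x_1)).
\]
The left side is negative and the right side is nonnegative, a contradiction. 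For (iii), fix $s\le x\le z$. If $x\le S$, $K$-convexity with the three points $s$ (or $S$), $x$ and $S$ (or $z$) yields $f(x)\le \max\{f(s),f(S)+K\}\le f(S)+K\le f(z)+K$. If $x>S$, use the points $S<x<z$: Definition~\ref{def:Kconvex} gives $K+f(z)-f(x)\ge \tfrac{z-x}{x-S}\,(f(x)-f(S))\ge 0$.

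\textbf{Part \ref{item-Kconvexity:c}: coupling.} Let $q=1-p$ and $F(y)=\mathbb{E}[f(y-\omega^y)]$. Here $y-\omega^y$ is the number of survivors, a sum of $y$ i.i.d.\ Bernoulli$(q)$ variables. Fix $x$, $a$ and $b$ with $x-b\ge 0$. Couple the three survivor counts by giving each of the first $x+a$ staff an independent Bernoulli$(q)$ survival indicator, and define
\begin{itemize}[label={}]
\item $U$ as the number of survivors among the first $x-b$ staff,
\item $B$ as the number of survivors among the next $b$ staff,
\item $A$ as the number of survivors among the final $a$ staff.
\end{itemize}
These three counts are independent, and
\[
F(x-b)=\mathbb{E}f(U),\qquad F(x)=\mathbb{E}f(U+B),\qquad F(x+a)=\mathbb{E}f(U+B+A).
\]
The target inequality is linear in $F$, so it is equivalent to
\[
(a+b)\,F(x)\le a\,F(x-b)+b\,\bigl(F(x+a)+K\bigr).
\]

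\textbf{Part \ref{item-Kconvexity:c}: conditioning.} Condition on $U=u$ and on $N:=A+B=n$, and write $\varphi(\beta)=f(u+\beta)$. Given $N=n$, the variable $B$ is hypergeometric by exchangeability of the $a+b$ units, with $\mathbb{E}[B\mid N=n]=nb/(a+b)$. Definition~\ref{def:Kconvex}, applied at $u+\beta$ with gaps $\beta$ and $n-\beta$, gives the chord form
\[
\varphi(\beta)\le \tfrac{n-\beta}{n}\varphi(0)+\tfrac{\beta}{n}(\varphi(n)+K)\qquad (0\le\beta\le n,\ n>0).
\]
The endpoints $\beta=0$ and $\beta=n$ are trivial when $K\ge 0$. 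This bound is affine in $\beta$, so taking the conditional expectation and substituting $\mathbb{E}[B\mid N=n]$ yields
\[
\mathbb{E}[\varphi(B)\mid u,n]\le \tfrac{a}{a+b}\varphi(0)+\tfrac{b}{a+b}(\varphi(n)+K).
\]
The case $n=0$ holds trivially. Averaging over $(U,N)$, which is legitimate by the integrability assumption, gives the required inequality, since $\mathbb{E}\varphi(N)$ averaged over $U$ equals $F(x+a)$.

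\textbf{Main obstacle.} The key difficulty was that a realisation-by-realisation use of $K$-convexity carries weights $(\beta,\alpha)$ rather than $(b,a)$. The remedy is to condition on the total number of survivors among the added units and exploit exchangeability, so that the random weights average exactly to $b/(a+b)$ and $a/(a+b)$.
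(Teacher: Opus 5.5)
Your proposal is correct and takes essentially the paper's route for part (c). The paper also couples the three post-turnover headcounts through disjoint blocks of Bernoulli survival indicators (Lemma~\ref{lemma:Binomial-K-convolution}), then conditions on the sum $A+B$ of added survivors, using the hypergeometric mean $nb/(a+b)$ and the inequality $r(K+F(n))\ge nF(r)$ with $F(k)=f(x+k)-f(x)$ (Lemma~\ref{lemma:Binomial-equivalence}), which is your chord form. You simply merge these two lemmas into one step rather than passing through Binomial-$K$-convexity, and your direct proofs of parts (a), (b) and (d), which the paper only cites from \citet{bertsekas2005dynamic}, are sound.
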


The proofs of parts \ref{item-Kconvexity:a}--\ref{item-Kconvexity:b} and \ref{item-Kconvexity:d} can be found in \citet{bertsekas2005dynamic}. However, the proof of part \ref{item-Kconvexity:c} is less straightforward, since the expectation operator involves the decision variable \(y\). To establish this result, we first introduce the notion of Binomial-\(K\)-convexity. We then prove in Lemma \ref{lemma:Binomial-K-convolution} that \(f\) being Binomial-\(K\)-convex implies that its expectation with respect to a binomial random variable is \(K\)-convex. Finally, we show in Lemma \ref{lemma:Binomial-equivalence} that Binomial-\(K\)-convexity is equivalent to \(K\)-convexity on the set of natural numbers. 

\begin{definition}
Let $K \geq 0$. A function $f: \mathbb{N} \rightarrow \mathbb{R}$ is said to be Binomial-\(K\)-convex if, for any $x, a, b \in \mathbb{N}$, and for any two independent binomial random variables $A \sim \mathrm{Binomial}(a, p)$ and $B \sim \mathrm{Binomial}(b, p)$ with any same success probability $p \in [0, 1]$, the following inequality holds:
\begin{align}
b\Big(& K + \mathbb{E}[f(x + A + B)]
        - \mathbb{E}[f(x + B)] \Big)  \notag\\
&\ge
a\Big( \mathbb{E}[f(x + B)] - f(x) \Big).
\label{ieq:BinomialKconvex}
\end{align}
\end{definition}

\begin{lemma}
    Let $f$ be Binomial-$K$-convex and $\omega^y \sim \mathrm{Binomial}(y, p)$. If $\mathbb{E}[|f(y - \omega^y)|] < \infty$, then the function $\mathbb{E}[f(y - \omega^y)]$ is $K$-convex.\label{lemma:Binomial-K-convolution}
\end{lemma}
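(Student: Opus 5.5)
Set $F(y)\triangleq\mathbb{E}[f(y-\omega^y)]$. The key observation is that $y-\omega^y$ counts the staff who remain, so $y-\omega^y\sim\mathrm{Binomial}(y,q)$ with $q\triangleq 1-p$. Hence $F(y)=\mathbb{E}[f(Z_y)]$, where $Z_y\sim\mathrm{Binomial}(y,q)$. The plan is to couple the three random variables appearing in the $K$-convexity inequality of Definition~\ref{def:Kconvex} for $F$ at the points $y-b$, $y$ and $y+a$. Binomial thinning lets us build them from one sequence of i.i.d.\ Bernoulli$(q)$ variables $\xi_1,\xi_2,\dots$. We write $Z_{y-b}=\sum_{j\le y-b}\xi_j$, $B=\sum_{j=y-b+1}^{y}\xi_j\sim\mathrm{Binomial}(b,q)$ and $A=\sum_{j=y+1}^{y+a}\xi_j\sim\mathrm{Binomial}(a,q)$. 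These are mutually independent, and
\[
Z_y=Z_{y-b}+B,\qquad Z_{y+a}=Z_{y-b}+B+A .
\]

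We need to show that $b\,(K+F(y+a)-F(y))\ge a\,(F(y)-F(y-b))$ for all $y$, $a$ and $b$ with $y-b\ge 0$. The argument conditions on $Z_{y-b}=x$. Independence gives, for each $x\in\mathbb{N}$,
\[
\mathbb{E}[f(Z_{y+a})\mid Z_{y-b}=x]=\mathbb{E}[f(x+A+B)],\qquad \mathbb{E}[f(Z_{y})\mid Z_{y-b}=x]=\mathbb{E}[f(x+B)],
\]
and $\mathbb{E}[f(Z_{y-b})\mid Z_{y-b}=x]=f(x)$. Now apply inequality \eqref{ieq:BinomialKconvex} at the point $x$ with success probability $q$; this is allowed because the definition quantifies over every $p\in[0,1]$. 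Doing so gives, pointwise in $x$,
\[
b\Big(K+\mathbb{E}[f(x+A+B)]-\mathbb{E}[f(x+B)]\Big)\ge a\Big(\mathbb{E}[f(x+B)]-f(x)\Big).
\]
We multiply by $\Pr(Z_{y-b}=x)$ and sum over $x=0,\dots,y-b$. Since the probabilities sum to one, the constant term $bK$ passes through unchanged. The tower property then yields exactly $b(K+F(y+a)-F(y))\ge a(F(y)-F(y-b))$, which is the required inequality.

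The integrability hypothesis $\mathbb{E}[|f(y-\omega^y)|]<\infty$ is what justifies splitting expectations and exchanging sums. Here the sums are in fact finite, since the binomial variables have bounded support, so this step is routine.

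The main subtlety is the bookkeeping between the two parameterisations. The Binomial-$K$-convexity definition is stated in terms of added successes, whereas $F$ is defined through subtracted failures. The obstacle is resolved by the reparametrisation $q=1-p$ together with the thinning coupling, which turns each of the three points into "base plus independent increments". After that, the step requiring care is to check that the increments $A$ and $B$ have exactly the lengths $a$ and $b$ and share the same success probability $q$, which holds by construction. The degenerate cases $a=0$ or $b=0$ hold trivially, or they follow directly from the same computation.
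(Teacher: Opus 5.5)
Your proof is correct and follows essentially the same route as the paper: you write $y-\omega^y$ as a $\mathrm{Binomial}(y,1-p)$ count of survivors, couple the three evaluation points through disjoint blocks of Bernoulli trials, apply the Binomial-$K$-convexity inequality with success probability $1-p$ pointwise in the base value, and then average over the base. Your ordering of the blocks (base, then $B$, then $A$) makes the coupling an exact pathwise identity, whereas the paper's identification $y+b-\omega^{y+b}=X+\widetilde B$ holds only in distribution; distributional equality is all either argument needs.
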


\begin{proof}

Let
\[
g(y):=\mathbb{E}\bigl[f(y-\omega^y)\bigr],
\qquad
\omega^y\sim\mathrm{Binomial}(y,p).
\]

We shall show that $g$ is $K$-convex. Since $\mathbb{E}[|f(y-\omega^y)|] < \infty$, all expectations involved in the definition of $g$ are finite and integrable.

Let $\{\omega_i\}_{i\ge 1}$ be independent identically distributed Bernoulli$(p)$ random variables and define
\[
\omega^n:=\sum_{i=1}^{n}\omega_i,
\qquad n\ge 0.
\]

Fix arbitrary $y,a,b\in\mathbb N$. Define
\[
X:=y-\sum_{i=1}^{y}\omega_i,
\]
\[
\widetilde A:=a-\sum_{i=y+1}^{y+a}\omega_i,
\]
and
\[
\widetilde B:=b-\sum_{i=y+a+1}^{y+a+b}\omega_i.
\]

Since these random variables are constructed from disjoint sets of Bernoulli trials,
$X$, $\widetilde A$, and $\widetilde B$ are independent. Moreover,
\[
\widetilde A\sim\mathrm{Binomial}(a,1-p),
\qquad
\widetilde B\sim\mathrm{Binomial}(b,1-p).
\]

Observe that
\[
y+a+b-\omega^{y+a+b}
=
X+\widetilde A+\widetilde B,
\]
and
\[
y+b-\omega^{y+b}
=
X+\widetilde B.
\]

Therefore,
\begin{align*}
g(y+a+b)
&=
\mathbb E\!\left[f(X+\widetilde A+\widetilde B)\right],\\
g(y+b)
&=
\mathbb E\!\left[f(X+\widetilde B)\right],\\
g(y)
&=
\mathbb E[f(X)].
\end{align*}

Since $f$ is Binomial-$K$-convex and Definition~\eqref{ieq:BinomialKconvex}
holds for any common success probability, it holds in particular for the
success probability $1-p$. Therefore, for every fixed $x\in\mathbb N$,
\[
b\Bigl(
K+\mathbb E[f(x+\widetilde A+\widetilde B)]
-\mathbb E[f(x+\widetilde B)]
\Bigr)
\ge
a\Bigl(
\mathbb E[f(x+\widetilde B)]-f(x)
\Bigr).
\]

Applying this inequality with $x=X$ and then taking expectation with respect to
$X$ yields
\[
b\Bigl(
K+\mathbb E[f(X+\widetilde A+\widetilde B)]
-\mathbb E[f(X+\widetilde B)]
\Bigr)
\ge
a\Bigl(
\mathbb E[f(X+\widetilde B)]
-\mathbb E[f(X)]
\Bigr).
\]

Substituting the expressions for $g$ gives
\[
b\bigl(K+g(y+a+b)-g(y+b)\bigr)
\ge
a\bigl(g(y+b)-g(y)\bigr).
\]

Since $y,a,b\in\mathbb N$ are arbitrary, $g$ is $K$-convex.

\end{proof}

\begin{lemma}
    The Binomial-\(K\)-convex is equivalent to K-convex applied on $\mathbb{N}$.\label{lemma:Binomial-equivalence}
\end{lemma}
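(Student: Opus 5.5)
The plan is to prove the two implications separately. The direction from Binomial-$K$-convexity to $K$-convexity should be immediate by specialising the success probability. The converse, from $K$-convexity to Binomial-$K$-convexity, carries the real content: I intend to condition on the realisations of $A$ and $B$, apply the deterministic $K$-convexity inequality pointwise, and then recombine.

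\emph{Binomial-$K$-convex $\Rightarrow$ $K$-convex.} The definition of Binomial-$K$-convexity quantifies over every $p\in[0,1]$, so I will take $p=1$. Then $A\equiv a$ and $B\equiv b$ almost surely, and \eqref{ieq:BinomialKconvex} reduces to
\[
b\bigl(K+f(x+b+a)-f(x+b)\bigr)\ge a\bigl(f(x+b)-f(x)\bigr).
\]
Setting $x'=x+b$, this is exactly the inequality in Definition~\ref{def:Kconvex} at the point $x'$, for every $x'\ge b$. Since $f$ is only defined on $\mathbb N$, these are all admissible triples $(x',a,b)$, so $f$ is $K$-convex.

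\emph{$K$-convex $\Rightarrow$ Binomial-$K$-convex.} Fix $x,a,b,p$ and write $D(j)\triangleq f(x+j)-f(x)$. I will condition on $B=j$ and $A=i$. For $j\ge1$, Definition~\ref{def:Kconvex} at the point $x+j$ gives
\[
K+f(x+j+i)-f(x+j)\ge \tfrac{i}{j}\,D(j).
\]
For $j=0$ the left side of \eqref{ieq:BinomialKconvex} is multiplied by $b$, and $D(0)=0$, so the $B=0$ terms can be absorbed provided $K\ge0$ and I handle separately the piece $\mathbb E[f(x+A)]-f(x)$. That piece I would control by $K$-convexity with the base point $x$ itself, or by the monotonicity structure in Lemma~\ref{lemma:Kconvex}\ref{item-Kconvexity:d}. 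Averaging over $A$, which is independent of $B$ and has $\mathbb E[A]=ap$, I obtain
\[
b\bigl(K+\mathbb E[f(x+A+B)]-\mathbb E[f(x+B)]\bigr)\;\ge\; b\,ap\,\mathbb E\!\left[\tfrac{D(B)}{B}\mathbf 1_{\{B\ge1\}}\right]+(\text{$B=0$ correction}).
\]
It then remains to show that this is at least $a\,\mathbb E[D(B)]$. Dividing by $a$, this amounts to the Jensen-type inequality $bp\,\mathbb E[D(B)/B]\ge \mathbb E[D(B)]$, with slack available from the $K$ terms that have not yet been used.

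The main obstacle is this last recombination. The identity $\mathbb E[B]=bp$ and the weight $1/B$ do not interact linearly, so I cannot simply pull the expectation through the ratio. My first attempt will be a size-biasing identity for the binomial law, $\mathbb E[B\,\phi(B)]=bp\,\mathbb E[\phi(B'+1)]$ with $B'\sim\mathrm{Binomial}(b-1,p)$. This rewrites $\mathbb E[D(B)]$ as $bp\,\mathbb E[D(B'+1)/(B'+1)]$, so what must be compared are the chord slopes $D(j)/j$ under two shifted binomial laws. To compare them I will use $K$-convexity once more, in its chord-slope form $D(j)/j\le K/(\ell-j)+(f(x+\ell)-f(x+j))/(\ell-j)$ for $\ell>j$. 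This should let the spare $K$ budget dominate any failure of monotonicity in $j\mapsto D(j)/j$. If this direct route stalls, the fallback is induction on $b$ (and then $a$), adding one Bernoulli trial at a time as in the proof of Lemma~\ref{lemma:discrete-convex}. With that scheme each step involves only a two-point mixture, for which the deterministic $K$-convexity inequality can be applied directly.
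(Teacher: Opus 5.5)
Your direction Binomial-$K$-convex $\Rightarrow$ $K$-convex (set $p=1$) is correct and matches the paper. The converse has a genuine gap, and it arises at your first aggregation step, not in the final recombination.

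Dividing the pointwise inequality $j\bigl(K+f(x+j+i)-f(x+j)\bigr)\ge i\,D(j)$ by $j=B$ and averaging over $A$ loses too much. As a result, the inequality you say ``remains to show'' is false, already for convex $f$ with $K=0$. Take $f(y)=y^2$, $x=0$, $a=1$, $b=2$, $p=\tfrac12$, so $D(j)=j^2$. Even keeping the $B=0$ term exactly,
\[
b\,ap\,\mathbb{E}\Bigl[\tfrac{D(B)}{B}\mathbf{1}_{\{B\ge 1\}}\Bigr]+b\,\mathbb{P}(B=0)\bigl(K+\mathbb{E}[f(x+A)]-f(x)\bigr)=1+\tfrac14<\tfrac32=a\,\mathbb{E}[D(B)],
\]
although the Binomial-$K$-convexity inequality itself holds here: its left side is $3$ and its right side is $\tfrac32$. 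Since this intermediate target is false, no further use of $K$-convexity, in chord-slope form or otherwise, can establish it.

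The size-biasing comparison you planned also runs the wrong way. For convex $f$ the chord slopes $D(j)/j$ are nondecreasing, and $B'+1$ stochastically dominates $B$. So $\mathbb{E}[D(B'+1)/(B'+1)]$ is the larger quantity, and with $K=0$ there is no spare $K$ to absorb the difference.

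The $B=0$ piece is a separate problem:
\begin{itemize}
\item $K+\mathbb{E}[f(x+A)]-f(x)$ can be negative, e.g.\ $f(y)=-y$ with $K<ap$.
\item $K$-convexity at base point $x$ with backward step $0$ is vacuous.
\item Part (d) of Lemma~\ref{lemma:K_properties} requires coercivity and gives a sign only for $x\ge s$.
\end{itemize}
Your fallback induction on $b$ is not specified enough to evaluate.

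The missing idea is to normalise by $N=A+B$ rather than by $B$. Write the same pointwise inequality as $j\bigl(K+D(i+j)\bigr)\ge (i+j)\,D(j)$ and condition on $N=n$. Given $N=n$, $B$ is hypergeometric with $\mathbb{E}[B\mid N=n]=nb/(a+b)$, which is linear in $n$. The factor $n$ therefore cancels, and you get $b\bigl(K+D(n)\bigr)\ge (a+b)\,\mathbb{E}[D(B)\mid N=n]$ for every $n$; for $n=0$ this reads $bK\ge 0$. Averaging over $N$ and rearranging yields exactly \eqref{ieq:BinomialKconvex}.

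This works because $B/N$ has conditional mean $b/(a+b)$ independent of $n$, so the nonlinearity you hit with the weight $1/B$ disappears. The configurations with $B=0$ also need no separate treatment, since there the pointwise inequality is simply $0\ge 0$.
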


\begin{proof}

We first prove that K-convexity $\Rightarrow$ Binomial-\(K\)-convexity.

Assume $f$ is K-convex on $\mathbb{N}$. Define $F(k) := f(x+k) - f(x)$ for any $k \in \mathbb{N}$. Let $A \sim \mathrm{Binomial}(a, p)$ and $B \sim \mathrm{Binomial}(b, p)$ be two independent binomial random variables, and let $N = A + B$. By the additive property of independent binomial random variables, $N \sim \mathrm{Binomial}(a+b, p)$.

By the definition of K-convexity on $\mathbb{N}$, for any realizations $n$ of $N$ and $r$ of $B$ (where $0 \le r \le n$), the following inequality holds deterministically (note that $F(n)=f(x+n)-f(x)$ and $F(r)=f(x+r)-f(x)$):
\[
r(K + F(n) - F(r)) \ge (n-r)F(r).
\]
Rearranging terms yields:
\[
r(K + F(n)) \ge nF(r).
\]

Taking the conditional expectation with respect to $B$ given $N = n$:
\[
\mathbb{E}[B \mid N = n](K + F(n)) \ge n \mathbb{E}[F(B) \mid N = n].
\]

Since $A$ and $B$ are independent, the conditional distribution of $B$ given $N=n$ is Hypergeometric with $\mathbb{E}[B \mid N=n] = \frac{nb}{a+b}$. Substituting this into the inequality:
\[
\frac{nb}{a+b}(K + F(n)) \ge n \mathbb{E}[F(B) \mid N = n].
\]
For $n=0$, the inequality holds trivially. For $n > 0$, dividing by $n$ and multiplying by $(a+b)$ gives:
\[
b(K + F(n)) \ge (a+b)\mathbb{E}[F(B) \mid N = n].
\]
Extending this to the random variable $N$:
\[
b(K + F(N)) \ge (a+b)\mathbb{E}[F(B) \mid N].
\]

Taking the unconditional expectation over $N$ and using the Law of Total Expectation:
\[
b(K + \mathbb{E}[F(N)]) \ge (a+b)\mathbb{E}[F(B)].
\]
Substituting $F(k) = f(x+k) - f(x)$ leads to:
\[
b(K + \mathbb{E}[f(x+A+B)] - f(x)) \ge (a+b)(\mathbb{E}[f(x+B)] - f(x)).
\]
Rearranging terms results in the definition of Binomial-\(K\)-convexity:
\[
b\Big( K + \mathbb{E}[f(x+A+B)] - \mathbb{E}[f(x+B)] \Big) \ge a\Big( \mathbb{E}[f(x+B)] - f(x) \Big).
\]

Next, we prove that Binomial-\(K\)-convexity $\Rightarrow$ K-convexity.

Assume $f$ is Binomial-\(K\)-convex. Since the condition holds for any $p \in [0, 1]$, we set $p=1$. In this case, $A$ and $B$ become deterministic constants $A=a$ and $B=b$. The expectations collapse to the values themselves, yielding:
\[
b\big( K + f(x+a+b) - f(x+b) \big) \ge a\big( f(x+b) - f(x) \big),
\]
which is the definition of K-convexity.

\end{proof}

\begin{theorem}
$G_t(x)$ and $C_t(x)$ are \(K\)-convex and the optimal hiring policy for each period is $(s, S)$, i.e.,
\begin{equation}
    q_t^\ast=\begin{cases}
    S_t-x_{t-1}\quad & x_{t-1}< s_t\\
    0 & x_{t-1}\ge s_t.
    \end{cases}\label{eq:sS}
\end{equation}

\label{theorem-K-convex}
\end{theorem}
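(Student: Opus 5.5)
We argue by backward induction on $t$, following the scheme of \citet{Scarf1960}, with Lemma~\ref{lemma:Kconvex}\ref{item-Kconvexity:c} (proved via Lemmas~\ref{lemma:Binomial-K-convolution} and~\ref{lemma:Binomial-equivalence}) replacing the classical convolution step.

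\emph{Induction hypothesis.} $C_{t+1}$ is $K$-convex on $\mathbb{N}$. The base case holds because $C_{T+1}\equiv 0$ is convex, hence $K$-convex by Lemma~\ref{lemma:Kconvex}\ref{item-Kconvexity:a}.

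\emph{Step 1: $G_t$ is $K$-convex.} Assume $C_{t+1}$ is $K$-convex.
\begin{itemize}
\item Lemma~\ref{lemma:Kconvex}\ref{item-Kconvexity:c} with $f=C_{t+1}$ shows that $y\mapsto \mathbb{E}[C_{t+1}(y-\omega^y)]$ is $K$-convex. Integrability is automatic, since $\omega^y$ has finite support and $C_{t+1}$ is finite-valued.
\item The term $cy$ is linear.
\item $L$ is discrete convex by Lemma~\ref{lemma:discrete-convex}.
\end{itemize}
We then need discrete convexity to imply $0$-convexity in the sense of Definition~\ref{def:Kconvex}. This holds because nondecreasing forward differences give
\[
\frac{f(x+a)-f(x)}{a}\ge \frac{f(x)-f(x-b)}{b}.
\]
(This is the content of Lemma~\ref{lemma:Kconvex}\ref{item-Kconvexity:a} for integer arguments.) Summing with Lemma~\ref{lemma:Kconvex}\ref{item-Kconvexity:b}, using weights $1$, $K$-convexity $+$ $0$-convexity $+$ $0$-convexity, gives that $G_t$ is $K$-convex.

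\emph{Step 2: the $(s,S)$ structure.} Because $h(1-p)y\le L(y)$, $c\ge 0$, and $C_{t+1}\ge 0$, we have $G_t(y)\to\infty$ whenever $h(1-p)>0$ or $c>0$. We will assume this, or impose an upper bound on the workforce to guarantee a finite minimiser. Lemma~\ref{lemma:Kconvex}\ref{item-Kconvexity:d} then yields $S_t=\arg\min G_t$ and $s_t$ with properties (i)--(iii). Rewriting the Bellman equation,
\[
C_t(x)=-cx+\min\Big\{G_t(x),\ \min_{y>x}\{K+G_t(y)\}\Big\}.
\]
We split into two cases.
\begin{itemize}
\item If $x<s_t$: by the definition of $s_t$ and property (ii), $G_t(x)>G_t(S_t)+K$, and $S_t>x$. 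So it is optimal to hire up to $S_t$, since any $y>x$ costs at least $K+G_t(S_t)$.
\item If $x\ge s_t$: property (iii) gives $G_t(x)\le G_t(y)+K$ for every $y\ge x$, so not hiring is optimal.
\end{itemize}
This establishes \eqref{eq:sS}.

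\emph{Step 3: $C_t$ is $K$-convex.} This closes the induction. The closed form is
\[
C_t(x)=-cx+\begin{cases}K+G_t(S_t) & x<s_t,\\ G_t(x) & x\ge s_t.\end{cases}
\]
Since $-cx$ is linear, it suffices to check $K$-convexity of the bracketed function $\Phi$. We verify $b(K+\Phi(x+a)-\Phi(x))\ge a(\Phi(x)-\Phi(x-b))$ in three cases, following the standard argument in \citet{bertsekas2005dynamic}.
\begin{itemize}
\item All three points $x-b$, $x$, $x+a$ are $\ge s_t$: this is $K$-convexity of $G_t$.
\item All three points are $<s_t$: $\Phi$ is constant there, and the inequality reduces to $bK\ge 0$.
\item Mixed case $x-b<s_t\le x$: if $\Phi(x)\le\Phi(x-b)$ we use $\Phi(x+a)\ge G_t(S_t)\ge \Phi(x)-K$. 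Otherwise, $G_t(x)>K+G_t(S_t)$ forces $x>S_t$ by property (iii). Then applying $K$-convexity of $G_t$ to the triple $(S_t,x,x+a)$ gives the result.
\end{itemize}

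\emph{Main obstacle.} The decision-dependent expectation in Step 1 is already handled by Lemmas~\ref{lemma:Binomial-K-convolution}--\ref{lemma:Binomial-equivalence}. The remaining delicate points are the mixed case in Step 3, where indices must be handled carefully on $\mathbb{N}$ with the boundary at $0$, and ensuring the coercivity required by Lemma~\ref{lemma:Kconvex}\ref{item-Kconvexity:d}.
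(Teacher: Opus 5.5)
Your proof follows the paper's route (Appendix~\ref{app:theorem-optimality}) essentially step for step: backward induction, then $K$-convexity of $G_t$ from Lemma~\ref{lemma:K_properties}\ref{item-Kconvexity:a}--\ref{item-Kconvexity:c}, then the $(s,S)$ rule and the closed form of $C_t$ from part~\ref{item-Kconvexity:d}, then a case check that $C_t$ is $K$-convex.

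The one omission is in Step~3. Your three cases are: all of $x-b,x,x+a$ at least $s_t$; all of them below $s_t$; and $x-b<s_t\le x$. These do not exhaust the possibilities, because the configuration $x-b<x<s_t\le x+a$ (the paper's Case~2) is missing. It closes by the same reasoning as your first mixed sub-case. There $\Phi(x)=\Phi(x-b)=K+G_t(S_t)$, so the right-hand side vanishes. The left-hand side equals $b\bigl(G_t(x+a)-G_t(S_t)\bigr)\ge 0$, because $S_t$ is a global minimiser. With this case added, the induction is complete.

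There are two minor differences from the paper. First, in the mixed case $x-b<s_t\le x$ you split on whether $G_t(x)\le K+G_t(S_t)$. In the nontrivial branch you use property (iii) to get $x>S_t$, then apply $K$-convexity to the triple $(S_t,x,x+a)$, using $x-S_t<b$. The paper instead applies $K$-convexity to $(s_t,x,x+a)$ and combines $G_t(s_t)\le G_t(S_t)+K$ with property (iii), which avoids the sub-split. Both arguments are valid.

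Second, you explicitly check coercivity, $G_t(y)\ge (c+h(1-p))y\to\infty$, which is a hypothesis of part~\ref{item-Kconvexity:d} that the paper never verifies. This check is not vacuous. For $h=c=0$, $\pi,W>0$ and $0<p<1$, the function $G_T=\pi g$ is strictly decreasing and attains no minimum, so no finite $S_T$ exists.
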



In our problem, as the decision variables are integers, Theorem \ref{theorem-K-convex} follows immediately with similar argument as \cite{Scarf1960} after the establishment of Lemma \ref{lemma:K_properties}. For completeness, the proof is provided in Appendix~\ref{app:theorem-optimality}.

\section{Computation of the problem}\label{sec:computation-milp}

We employ an efficient computational approach to determine the total cost, as well as the values of $s$ and $S$ in each period for the base case (single grade and no leadtime). This is achieved by approximating the expected number of staff and outsourced workers using piecewise constraints, and solving the resulting problem through a mixed-integer linear programming (MILP) model. We find that several other widely used computational methods for inventory problems \cite[e.g.,][]{bollapragada1999simple} cannot be applied to solve our problem.

The stochastic programming model for our problem is below, in which $z_t$ is a binary variable signaling whether to hire in period $t$, and $U_t$ is the number of outsourcing staff in period $t$. 
\begin{align}
   &\min&& \mathbb{E}\{h x_1+\pi U_1 +K z_1+c (y_1-x_0) +  \mathbb{E}\{h x_2+\pi U_2 +K z_2+c (y_2-x_1) + \ldots\nonumber\\
   &&&\qquad\mathbb{E}\{ h x_T+\pi U_T +K z_T+c (y_T-x_{T-1})\}\} \}\label{obj:sp}\\
   & \text{s.t.}&& t=1,2,\dots,T\nonumber\\
 &&& x_t=y_t-\omega_t^y,\label{con:sp-xy}\\
 &&& y_t- x_0\geq 0, \quad t=1\label{con:ytx0}\\
  &&& y_t-x_{t-1}\geq 0, \quad t> 1\\ 
     &&& y_t - x_0 \leq z_t M,\qquad  t=1\\
   &&& y_t - x_{t-1} \leq z_t M,\qquad t>1\label{con:ytxt-1}\\
   &&&U_t =  \max(W+\omega_t^y-y_{t},0),\label{con:U_t}\\
   &&& U_t\geq 0, y_t\geq 0, z_t\in\{0,1\}\label{con:non-negative}.
\end{align}

Objective function \eqref{obj:sp} is the total expected costs for this multi-stage problem. Constraint \eqref{con:sp-xy} defines the relationship between $x_t$, $y_t$ and $\omega_t^y$. Constraint \eqref{con:ytx0}-\eqref{con:ytxt-1} means that if not launching hiring in period $t$ ($z_t=0$), then $y_t=x_{t-1}$. Constraint \eqref{con:U_t} defines the computation of $U_t$ and Constraint \eqref{con:non-negative} is about the non-negativity and binarity of the decision variables. 


We reformulate the above model as a MILP that can be solved by off-the-shelf solvers, following the “static-dynamic” strategy for inventory problems introduced by \cite{bookbinder1988strategies}: inventory review periods are predetermined at the beginning of the planning horizon, while the corresponding order quantities are determined only when the orders are placed. In addition, we introduce a new binary decision variable, $P_{jt}$, which equals one if and only if the most recent staff hiring before period $t$ occurred in period $j$.



Following the ``static--dynamic uncertainty'' strategy, the variables $z_t$, $P_{jt}$, and $y_t$ (the hire-up-to level at the hiring period) must be determined once and for all. The expectation operator is applied to the stochastic variables $x_t$, $U_t$, and $\omega_t^y$ in both the constraint equations and the objective function. Note that  
\[
\mathbb{E}[x_t] = y_t - \mathbb{E}[\omega_t^y],
\]
and that $\sum_{k=j}^t \omega_k^y$ follows a binomial distribution, i.e.,  
\[
\sum_{k=j}^t \omega_k^y \sim \text{Binomial}\Big(y_j, \, 1-\prod_{k=j}^t (1-p_k)\Big),
\]
where the most recent staff hiring before period $t$ occurred in period $j$. We refer to the interval from period $j$ to period $t$ as a \textit{hiring cycle}.  The expectation formula for $x_t$ can be rewritten as  
\[
\mathbb{E}[x_t] = y_j - y_j\Big[1 - \prod_{k=j}^t (1-p_k)\Big] = y_j \prod_{k=j}^t (1-p_k).
\]

Then, the MILP model is:

\noindent\textbf{Model MILP}
\begin{align}
   &\min&& \sum_{t=1}^T \left(h\mathbb{E}[x_t]+\pi \mathbb{E}[U_t] + K z_t+c(y_t-\mathbb{E}[x_{t-1}]) \right)\label{obj:piecewise}\\
      & \text{s.t.}&& t=1,2,\dots,T\nonumber\\
 &&& \mathbb{E}[x_t]\geq y_j\prod_{k=j}^t (1-p_k)-(1-P_{jt})M,\qquad j=1,2,\dots, t\label{con:xtyj1}\\
 &&& \mathbb{E}[x_t]\leq y_j\prod_{k=j}^t (1-p_k)+(1-P_{jt})M,\qquad j=1,2,\dots, t\label{con:xtyj2}\\
 &&& y_t-x_0 \geq 0, \qquad t=1\label{con:ytx0-piece}\\
  &&& y_t-\mathbb{E}[x_{t-1}]\geq 0,\qquad t>1\\ 
   &&& y_t - x_0 \leq z_t M,\qquad  t=1\\
   &&& y_t - \mathbb{E}[x_{t-1}] \leq z_t M,\qquad  t>1\label{con:ytxt-1-peice}\\
&&&\sum_{j=1}^t P_{jt}=1,\qquad j=1,2,\dots, t  \label{con:sumPjt} \\
&&&P_{jt}\geq z_j-\sum_{k=j+1}^{t}z_k,  \label{con:Pjtzj} \\
   &&&\mathbb{E}[U_t] =  \mathbb{E}[\max\{W+\omega_t^y-y_t, 0\}],\label{con:EUt}\\
   &&&P_{jt}\in\{0,1\},\qquad j=1,2,\dots, t\\
   &&& \mathbb{E}[U_t]\geq 0, \mathbb{E}[x_t]\geq 0, y_t\geq 0, z_t \in\{0, 1\}\label{con:non-negative-piecewise}.
\end{align}

By taking expectations over $x_t$, $U_t$, and $\omega_t^y$, the objective function \eqref{obj:sp} is transformed into \eqref{obj:piecewise}, while constraints \eqref{con:ytx0}--\eqref{con:ytxt-1} become \eqref{con:ytx0-piece}--\eqref{con:ytxt-1-peice}, and constraint \eqref{con:non-negative} is replaced by \eqref{con:non-negative-piecewise}. Constraints \eqref{con:xtyj1}--\eqref{con:xtyj2} specify the computation of the expectation of $x_t$ when period $j$ initiates a hiring. Constraint \eqref{con:sumPjt} ensures that there can be at most one most recent hiring prior to period $t$, and constraint \eqref{con:Pjtzj} defines the relationship between $P_{jt}$ and $z_j$. Constraint \eqref{con:EUt} is replaced by the linear approximation constraint \eqref{con:piecewise}, as detailed below.

Since $\mathbb{E}[U_t] = g(y)$, and it has been established that $g(y)$ is discrete convex by Lemma \ref{lemma:discrete-connvex}, the following property regarding its forward difference holds.

\begin{proposition}
  The forward difference of $g(y)$ in a hiring cycle from period $j$ to period $t$ is given by the formula:
\begin{equation}
    g(y+1)-g(y)=
    -(1-p_c)(1-{F}^y(y-W))\label{eq:forward-diff2}
\end{equation}
where $F^y(\cdot)$ is the cumulative binomial distribution function and $p_c=1-\prod_{k=j}^t(1-p_k)$.\label{prop:forward-diff}  
\end{proposition}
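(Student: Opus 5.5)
The plan is to reuse the Bernoulli decomposition from the proof of Lemma~\ref{lemma:discrete-convex}, applied to the cumulative turnover over the hiring cycle. First I would justify the distribution being used. Within a hiring cycle from period $j$ to period $t$, no hiring occurs after period $j$, so each of the $y$ staff present after hiring in period $j$ independently survives periods $j,\dots,t$ with probability $\prod_{k=j}^t(1-p_k)$. Hence the cumulative turnover $\omega^y\triangleq\sum_{k=j}^t\omega_k^y$ is $\mathrm{Binomial}(y,p_c)$ with $p_c=1-\prod_{k=j}^t(1-p_k)$, as noted just before Model MILP. Thus
\[
g(y)=\mathbb{E}[\omega^y-y+W]^+,\qquad \omega^y\sim\mathrm{Binomial}(y,p_c),
\]
which has exactly the form treated in Lemma~\ref{lemma:discrete-convex} with $p$ replaced by $p_c$.

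Second, I would reuse the coupling $\omega^{y+1}=\omega^y+\beta$, where $\beta\sim$ Bernoulli$(p_c)$ is independent of $\omega^y$. Conditioning on $\beta$ gives, as in the proof of Lemma~\ref{lemma:discrete-convex},
\[
g(y+1)-g(y)=(1-p_c)\bigl(H(1)-H(0)\bigr),\qquad H(m)\triangleq\mathbb{E}[\omega^y-y-m+W]^+ .
\]

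Third, I would evaluate $H(1)-H(0)$ pointwise. Set $Z=\omega^y-y+W$, which is integer-valued. Then
\[
(Z-1)^+-Z^+=\begin{cases}-1, & Z\ge 1,\\ 0, & Z\le 0.\end{cases}
\]
Taking expectations gives
\[
H(1)-H(0)=-\Pr(Z\ge1)=-\Pr(\omega^y\ge y-W+1)=-\bigl(1-F^y(y-W)\bigr),
\]
where $F^y$ is the CDF of $\mathrm{Binomial}(y,p_c)$. Substituting into the previous display yields \eqref{eq:forward-diff2}.

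The only delicate points are bookkeeping rather than analysis. One is the boundary case $y<W$: there $F^y(y-W)=0$ under the convention that a CDF vanishes at negative arguments, and the formula correctly gives $-(1-p_c)$. The other is making sure the cycle-level parameter $p_c$, rather than the single-period $p_t$, is used consistently. I expect the justification of the $\mathrm{Binomial}(y,p_c)$ law, i.e.\ that successive binomial thinnings compose multiplicatively in the survival probability, to be the step needing the most care. I would argue it directly from the independence of individual resignation decisions across periods.
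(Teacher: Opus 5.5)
Your proposal is correct and follows the paper's proof essentially step for step: the Bernoulli coupling $\omega^{y+1}=\omega^y+\beta$ yields $g(y+1)-g(y)=(1-p_c)\bigl(H(1)-H(0)\bigr)$, and evaluating $(Z-1)^+-Z^+$ pointwise gives $H(1)-H(0)=-(1-F^y(y-W))$. Your version is in fact a little more careful than the paper's, because you justify the $\mathrm{Binomial}(y,p_c)$ law through multiplicative survival probabilities (the paper's proof loosely calls $p_c$ ``the probability that no hiring occurs''), and you explicitly use that $Z$ is integer-valued, which the paper's identity stated ``for any real number $x$'' tacitly requires since it fails for $x\in(0,1)$.
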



\begin{proof}
From the proof of Lemma ~\ref{lemma:discrete-convex}, the forward difference of $g(y)$ over a single period can be written as
\[
g(y+1)-g(y)=(1-p)\bigl(H(1)-H(0)\bigr).
\]

In a hiring cycle from period $j$ to period $t$, the probability that no hiring occurs throughout the cycle is
$p_c=1-\prod_{k=j}^t(1-p_k)$. Hence, by interpreting $g(y)$ as the expected number of outsourced staff over the entire hiring cycle, we obtain
\begin{align*}
g(y+1)-g(y)
&=(1-p_c)\bigl(H(1)-H(0)\bigr)\\
&=(1-p_c)\left(\mathbb{E}[\omega^y-y-1+W]^+-\mathbb{E}[\omega^y-y+W]^+\right).
\end{align*}

Define the random variable
\[
X :=  \omega^y-y+W.
\]
Then the difference of the two expectations can be rewritten as
\[
\mathbb{E}[\omega^y-y-1+W]^+-\mathbb{E}[\omega^y-y+W]^+
=\mathbb{E}\big[(X-1)^+-X^+\big].
\]

For any real number $x$, the positive-part operator satisfies
\[
(x-1)^+-x^+=
\begin{cases}
-1, & x>0,\\
0, & x\le 0,
\end{cases}
= -1+\mathbf{1}\{x\le 0\}.
\]
Applying this identity pointwise to the random variable $X$ and taking expectations yields
\[
\mathbb{E}\big[(X-1)^+-X^+\big]
=-1+\mathbb{E}\big[\mathbf{1}\{X\le 0\}\big]
=-1+\mathbb{P}(X\le 0).
\]

Since $X=\omega^y-y+W$, the event $\{X\le 0\}$ is equivalent to
\[
\omega^y\le y-W.
\]
Therefore,
\[
g(y+1)-g(y)
=(1-p_c)\bigl(-1+\mathbb{P}(\omega^y\le y-W)\bigr)=-(1-p_c)(1-F^y(y-W)),
\]
and this completes the proof.
\end{proof}




To approximate $\mathbb{E}[U_t]$, we adopt a piecewise linear approximation technique with $N$ linear functions:  

\begin{equation}
    \mathbb{E}[U_t] \geq \alpha_i y_j + \beta_i - (1-P_{jt})M, \qquad i=1,2,\dots, N, \label{con:piecewise}
\end{equation}

\noindent where $\alpha_i$ and $\beta_i$ denote the slope and intercept of the linear functions, respectively. Each linear function corresponds to a tangent line of $g(y)$. By partitioning the domain of $g(y)$ into $N$ segments, the slope $\alpha_i$ is computed using the forward difference in Eq.~\eqref{eq:forward-diff2} given the y-coordinate $y_i$ at the tangent point, while the intercept $\beta_i$ is determined from the tangent line equation. The procedure for dividing the domain is described below.

Since $\lim_{y\rightarrow +\infty} F^y(y-W) = 1$ for $y \geq W$, and $F^y(y-W) = 0$ for $y < W$, we employ a heuristic procedure to partition the domain of $g(y)$ into $N$ tangent lines (corresponding to $N-1$ segments) based on the variation of $F^y(y-W)$:  
\begin{itemize}
    \item The $1_\text{st}$ tangent line is constructed at the tangent point with y-coordinate $W-1$, with slope $-(1-p_c)$.  

    \item The $N_\text{th}$ tangent line is constructed at the tangent point with y-coordinate $y_N$, chosen such that $F^y(y_N-W) \approx 1$. In the numerical experiments, we set $F^y(y_N-W) = 0.9999$.  

    \item For $2 \leq i \leq N-1$, the $i_\text{th}$ tangent line is constructed at the tangent point with y-coordinate $y_i$, where $y_i$ is the smallest integer satisfying:  
    \[
    F^y(y_i-W) - F^y(y_{i-1}-W) > \frac{1}{N-1} \quad \text{if } i>1, 
    \]
    and  
    \[
    F^y(y_i-W) > \frac{1}{N-1} \quad \text{if } i=1.
    \]
    Recall that there are $N-1$ tangent lines when $y > W$. If no such $y_i$ exists, we set $y_i = y_{i-1}$.  
\end{itemize}

The vertical coordinate of each tangent point is obtained by evaluating $g(y)$ at the corresponding horizontal coordinate. Figure~\ref{fig: gy-piecewise} illustrates the piecewise linear approximation of $g(y)$ with $W=50$, $p=0.3$, and three tangent lines.

\begin{figure}[!ht]
\centering
\subfigure[$g(y)$ before piecewise approximation]{
\pgfplotsset{height=6.8cm} 
\begin{tikzpicture}
\begin{axis}[xlabel=$y$,ylabel=$g(y)$,xmin=39,xmax=100,ymin=0,ymax=23,clip mode=individual,ytick=\empty,tick label style={font=\small},
y label style={at={(0,0.5)}},
x label style={at={(0.5,-0.1)}}]

\pgfplotstableread{gy.txt}\mydata;
\addplot [only marks, mark options = {red}, mark size=1pt]
table [x expr=\thisrowno{0}, y expr=\thisrowno{1}] {\mydata};

\end{axis}
\end{tikzpicture}
}
\subfigure[$g(y)$ after piecewise approximation]{
\pgfplotsset{height=6.8cm} 
\begin{tikzpicture}
\begin{axis}[xlabel=$y$,xmin=39,xmax=100,ymin=0,ymax=23,clip mode=individual,ytick=\empty,tick label style={font=\small},
y label style={at={(0,0.5)}},
x label style={at={(0.5,-0.1)}}]

\pgfplotstableread{gy.txt}\mydata;
\addplot [only marks, mark options = {red}, mark size=1pt]
table [x expr=\thisrowno{0}, y expr=\thisrowno{1}] {\mydata};

\addplot[color=blue, line width=0.5pt] coordinates{ (40, 22) (67.24, 2.93)};

\addplot[color=blue, line width=0.5pt] coordinates{ (67.24, 2.93) (76.10, 0.00001)};
\addplot[color=blue, line width=0.5pt] coordinates{ (76.10, 0.00001) (100, 0) };

\end{axis}
\end{tikzpicture}
}
\caption{Piecewise approximation for $g(y)$ with 3 tangent lines}\label{fig: gy-piecewise}
\end{figure}




To obtain the values of $s$ and $S$, we follow the method proposed in \cite{xiang2018computing}. The main idea is as follows:

\begin{itemize}
\item For each period, construct the MILP-S model: by setting $z_1 = 0$ in the MILP model, the objective value serves as an approximation of $G_t(x_0) - cx_0$. In addition, $x_0$ is treated as a decision variable in the MILP model, the resulting value of $x_0$ provides an approximation of $S$ in period $t$, while the objective value approximates $G_t(S) - cS$.
\item Next, determine the approximate value of $s$ via a binary search over the interval $[0, S]$, such that $G_t(s) = G_t(S) + K$.
\end{itemize}

Since $S$ must be an integer in our discrete problem, we round the value of $S$ obtained from the MILP-S solution. The detailed procedure is presented in Algorithm \ref{algorithm1}.

\begin{algorithm}[!ht]
\caption{The binary search algorithm to compute $s$ and $S$}\label{algorithm1}
 \KwData{parameter values of the problem, stepsize. }
 \KwResult{approximate values $s$ and $S$ in each period.}

\For {$t\leftarrow 1$ \KwTo $T$}
{          
build the MILP-S model from current period $t$ to $T$\;
solve MILP-S and get the approximate value of $S$ and $G_t(S)$\;
low $\leftarrow 0$; high $\leftarrow S$\;
$S\leftarrow \text{round}(S)$; mid $\leftarrow$ high/2\;
\While{low $<$ high}{
mid $\leftarrow$ \text{round}((high+low)/2)\;
run MILP-S with $x_0\leftarrow$ mid and obtain $G_t(\text{mid})$\;
\uIf{$G_t(\text{mid})< G_t(S)+K$}{
high $\leftarrow$ mid - stepsize\;
}
\uElseIf{$G_t(\text{mid})> G_t(S)+K$ }
{
low $\leftarrow$ mid + stepsize\;
}
\Else{
low $\leftarrow$ high;
}
}
$s\leftarrow$ mid\;
}
\end{algorithm}





\section{Numerical tests}\label{sec:numerical-tests}

We evaluate the proposed piecewise approximation method on an eight-period test bed. All numerical experiments are conducted on a MacBook equipped with an Apple M1 Pro CPU and 16~GB of RAM. The MILP models are solved using Gurobi~10.0.1. Both the dynamic programming algorithm and the MILP formulation are implemented in Java.

The turnover rate $p$ takes values in $\{0.1, 0.5, 0.9\}$; the fixed hiring cost $K$ is set to $\{50, 1000, 2000\}$; the salary cost $h$ to $\{30, 100, 200\}$; and the unit outsourcing (penalty) cost $\pi$ to $\{50, 250, 2200\}$. Minimum staffing requirements are examined under four scenarios: a stationary pattern $W=(40,40,40,40,40,40,40,40)$, an increasing pattern $(10,20,30,40,50,60,70,80)$, a decreasing pattern $(80,70,60,50,40,30,20,10)$, and a seasonal pattern $(40,50,80,60,40,50,80,60)$. The unit variable hiring cost is fixed at 20. The piecewise approximation employs 15 segments. Table~\ref{table:NumericalResults} reports the numerical results, where MILP denotes the objective value of the mixed-integer linear programming model, and MILP--$sS$ represents the simulated objective value obtained from 10{,}000 demand samples after computing the corresponding $(s,S)$ levels via binary search.

Results indicate that the turnover rate has a pronounced impact on solution quality and computational performance. As $p$ increases from 0.1 to 0.9, the average optimality gap rises from 0.35\% to 0.65\% for the MILP model and from 0.18\% to 0.35\% for MILP--$sS$. Meanwhile, the maximum gap increases from 3.69\% to 8.15\% in MILP and from 2.37\% to 3.90\% in MILP--$sS$. Higher turnover rates also lead to a substantial increase in computation time, from 0.01\,s to 7.99\,s in MILP and from 2.42\,s to 170.09\,s in MILP--$sS$.

A similar trend is observed for penalty costs. As the unit outsourcing (penalty) cost increases, the average optimality gap grows from 0.02\% to 1.40\% in MILP and from 0.01\% to 0.68\% in MILP--$sS$, while the maximum gap increases from 1.09\% to 8.15\% and from 0.16\% to 3.90\%, respectively.

In contrast, higher salary and fixed hiring costs improve approximation accuracy. When the salary cost increases from 30 to 200, the average gap decreases from 1.22\% to 0.10\% in MILP and from 0.63\% to 0.08\% in MILP--$sS$, and the maximum gap declines from 8.15\% to 1.65\% and from 3.90\% to 0.47\%, respectively. Similarly, increasing the fixed hiring cost from 50 to 2000 reduces the average gap from 0.79\% to 0.35\% in MILP and from 0.38\% to 0.19\% in MILP--$sS$, with the corresponding maximum gap decreasing from 8.15\% to 2.98\% and from 3.90\% to 2.03\%. By comparison, minimum staffing requirements do not exhibit a systematic effect on solution quality.

Overall, the proposed piecewise approximation method performs well, delivering small optimality gaps---0.52\% on average for MILP and 0.27\% for MILP--$sS$---while maintaining high computational efficiency, with average computation times of 2.84\,s and 59.75\,s, respectively.

When the variable hiring cost $c$ is absent, our problem reduces to a special case for which the recursion-free approximation method of \cite{kilic2024simple} can be directly applied.

\begin{table}[!ht]
\centering
\caption{Computational study results of different computation methods.}\label{table:NumericalResults}
\begin{tabular}{lccccccc}
\toprule
&  \multicolumn{3}{c}{MILP} & 
\multicolumn{3}{c}{MILP-sS} &\multirow{2}*{Cases}\\
\cmidrule(lr){2-4} \cmidrule(lr){5-7}
 & Avg. Gap & Max Gap &Time  &Avg. Gap & Max Gap & Time  \\
\midrule
Turnover rate \\
\specialrule{0em}{1pt}{1pt}
0.1    &0.35\% &3.69\%  &0.01s  &0.18\% &2.37\% &2.42 s   &108\\
0.5   &0.58\% &6.18\%  &0.50 s &0.29\% &3.83\% &6.75 s  &108\\
0.9   &0.65\% &8.15\%  &7.99s &0.35\% &3.90\%   &170.09s   &108\\
\specialrule{0em}{2pt}{2pt}
Fixed cost \\
\specialrule{0em}{1pt}{1pt}
50  &0.79\%  &8.15\%  &2.72s  &0.38\% &3.90\%    &56.15s  &108\\
1000  &0.44\% &4.00\% &2.86s  &0.44\% &2.22\%   &61.03s   &108\\
2000 &0.35\%  &2.98\%  &2.94s  &0.19\% &2.03\%   &62.08s   &108\\
\specialrule{0em}{2pt}{2pt}
salary\\
\specialrule{0em}{1pt}{1pt}
30    &1.22\% &8.15\% &2.74s  &0.63\% &3.90\%  &72.65s   &108 \\
100   &0.25\% &2.57\% &2.89s  &0.12\% &0.76\%   &53.20s  &108 \\
200   &0.10\% &1.65\% &2.88s  &0.08\% &0.47\%    &53.41s &108 \\
\specialrule{0em}{2pt}{2pt}
unit penalty cost\\
\specialrule{0em}{1pt}{1pt}
50     &0.02\%   &1.09\%    &2.83s &0.01\%   &0.16\%  &30.43s  &108\\
250    &0.15\%   &1.72\%    &2.87s &0.13\%   &0.95\%  &73.61s   &108 \\
2200   &1.40\%   &8.15\%    &2.80s &0.68\%   &3.90\%  &75.23s   &108 \\

\specialrule{0em}{2pt}{2pt}
min required number\\
\specialrule{0em}{1pt}{1pt}
stationary &0.66\%   &8.15\%   &1.73s &0.22\%  &2.15\%   &40.36s    &81 \\
increasing &0.50\%   &7.75\%   &4.41s &0.33\%  &3.87\%   &89.70s    &81 \\
decreasing &0.48\%   &7.75\%   &2.80s &0.27\%  &3.83\%   &21.44s    &81 \\
seasonal   &0.45\%   &7.41\%   &4.20s &0.28\%  &3.90\%   &87.52s    &81 \\
\specialrule{0em}{3pt}{3pt}
General  &0.52\%  &8.15\%  &2.84s  &0.27\% &3.90\%    & 59.75s   &324\\
\bottomrule
\end{tabular}
\end{table}

\section{Conclusions}\label{sec:conclusion}

This paper studied a finite-horizon workforce planning problem with fixed recruitment costs and stochastic staff turnover. The distinctive feature of the model is that turnover is decision-dependent: after recruitment, the number of potential leavers depends on the post-hiring workforce level, and departures follow a binomial distribution. This feature prevents a direct application of the classical inventory-control argument for \((s,S)\) optimality, because the transition probabilities vary with the hiring decision.

We showed that this difficulty can be overcome under binomial turnover. First, we established the discrete convexity of the single-period expected salary and shortage-penalty cost. We then introduced Binomial-\(K\)-convexity and used it to prove that \(K\)-convexity is preserved under the binomial propagation induced by workforce turnover. These results imply that the optimal hiring policy has an \((s,S)\) structure in each period: recruitment is initiated only when the workforce level is sufficiently low, and staff are then hired up to a target level. This provides a structural explanation for lumpy hiring in settings with stochastic turnover and fixed recruitment costs, consistent with empirical evidence on discontinuous employment adjustment \citep[e.g.,][]{hamermesh1989labor,caballero1997aggregate}.

We also developed a tractable mixed-integer linear programming approximation for computing the policy parameters. Across our test bed, the MILP approximation achieved an average optimality gap of 0.52\% and an average computation time of 2.84 seconds. The policy obtained from the MILP-based \((s,S)\) computation performed even better in simulation, with an average optimality gap of 0.27\%, although at a higher average computation time of 59.75 seconds. These results indicate that the structural policy can be computed efficiently and that the proposed approximation provides a practical way to implement the \((s,S)\) hiring rule.

Several directions remain open for future research. Natural extensions include richer multi-grade workforce systems, grade-dependent turnover and promotion behaviour, recruitment failure, and hiring capacity constraints. Another promising direction is to study how the \((s,S)\) structure changes when turnover probabilities depend on tenure, workload, or broader labour-market conditions.







\bibliographystyle{plainnat}
\bibliography{mybib}

\clearpage

\begin{appendices}
\renewcommand\sectionname{Appendix}

\section{Preservation of discrete convexity under binomial thinning}\label{app:binomial-thinning}

\begin{lemma}[Preservation of discrete convexity under binomial thinning]
Let \(f:\mathbb Z\to\mathbb R\) be discrete convex. Let
\(\{\xi_j\}_{j\ge 1}\) be independent Bernoulli random variables with common
parameter \(\gamma\in[0,1]\), and define
\[
Z^y:=\sum_{j=1}^y \xi_j,\qquad y\in\mathbb N,
\]
with \(Z^0=0\). Then \(Z^y\sim \operatorname{Binomial}(y,\gamma)\). Assume that
the relevant expectations are finite, and define
\[
F(y):=\mathbb E[f(Z^y)],\qquad y\in\mathbb N.
\]
Then \(F\) is discrete convex on \(y\).
\label{lemma:binomial-transition-preserves-convexity}
\end{lemma}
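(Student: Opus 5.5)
We mirror the Bernoulli decomposition already used in the proof of Lemma~\ref{lemma:discrete-convex}. Since \(F\) is only defined on \(\mathbb N\), discrete convexity is to be checked at interior points, that is, we need
\[
F(y+2)-2F(y+1)+F(y)\ge 0\qquad\text{for all }y\in\mathbb N.
\]
We couple the three random variables on one probability space: \(Z^{y+1}=Z^y+\xi_{y+1}\) and \(Z^{y+2}=Z^y+\xi_{y+1}+\xi_{y+2}\), where \(\xi_{y+1},\xi_{y+2}\) are independent of \(Z^y\) by construction. Conditioning on \(Z^y\) reduces the statement to a pointwise inequality, which we then average.

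\textbf{Step 1 (conditioning).} Fix a realisation \(Z^y=z\) and set \(\phi(m):=f(z+m)\) for \(m\in\{0,1,2\}\). Then
\begin{align*}
\mathbb E\bigl[f(Z^{y+1})\mid Z^y=z\bigr]&=(1-\gamma)\phi(0)+\gamma\phi(1),\\
\mathbb E\bigl[f(Z^{y+2})\mid Z^y=z\bigr]&=(1-\gamma)^2\phi(0)+2\gamma(1-\gamma)\phi(1)+\gamma^2\phi(2).
\end{align*}

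\textbf{Step 2 (collecting coefficients).} We form the conditional second difference
\[
\mathbb E\bigl[f(Z^{y+2})-2f(Z^{y+1})+f(Z^y)\mid Z^y=z\bigr].
\]
The coefficient of \(\phi(0)\) is \((1-\gamma)^2-2(1-\gamma)+1=\gamma^2\). The coefficient of \(\phi(1)\) is \(2\gamma(1-\gamma)-2\gamma=-2\gamma^2\). The coefficient of \(\phi(2)\) is \(\gamma^2\). Hence the conditional second difference equals
\[
\gamma^2\bigl[f(z+2)-2f(z+1)+f(z)\bigr]=\gamma^2\,\Delta^2 f(z)\;\ge\;0,
\]
where \(\Delta^2 f(z)\ge 0\) by the definition of discrete convexity applied at the point \(z+1\).

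\textbf{Step 3 (unconditioning).} Taking the expectation over \(Z^y\) gives
\[
F(y+2)-2F(y+1)+F(y)=\gamma^2\,\mathbb E\bigl[\Delta^2 f(Z^y)\bigr]\ge 0.
\]
This is a nonnegative weighted sum of second differences of \(f\), so it is nonnegative by Lemma~\ref{lemma:discrete-connvex} (or directly, since each term is nonnegative). The finiteness assumption justifies splitting the expectation into the three terms and applying the tower property.

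The only delicate point is the bookkeeping in Step 2, namely verifying that the three coefficients combine exactly into \(\gamma^2\Delta^2 f\) with no leftover first-difference term. Because the two added trials \(\xi_{y+1},\xi_{y+2}\) share the same parameter \(\gamma\), the expansion is symmetric and this identity holds. The degenerate cases \(\gamma\in\{0,1\}\) are covered by the same formula and need no separate treatment. Applying the result with \(f(k)=(k-y+W)^+\) is not immediate, because that \(f\) itself depends on \(y\). For that reason Lemma~\ref{lemma:discrete-convex} is better viewed as a companion to this result (it uses the same decomposition after a change of variable) than as a direct corollary of it.
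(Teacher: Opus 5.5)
Your proof is correct and is essentially the paper's argument. Both rest on the coupling $Z^{y+1}=Z^y+\xi_{y+1}$: the paper writes $F(y+1)-F(y)=\gamma\,\mathbb{E}[\Delta f(Z^y)]$ and uses that $\Delta f$ is nondecreasing along $Z^{y+1}\ge Z^y$, while you expand two trials to get the exact identity $F(y+2)-2F(y+1)+F(y)=\gamma^2\,\mathbb{E}[f(Z^y+2)-2f(Z^y+1)+f(Z^y)]$, which is the same computation carried one step further.

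Your closing remark is wrong, though. Set $Z^y:=y-\omega^y\sim\mathrm{Binomial}(y,1-p)$. Then $g(y)=\mathbb{E}[(W-Z^y)^+]$, and $k\mapsto(W-k)^+$ is discrete convex and does not depend on $y$. So Lemma~\ref{lemma:discrete-convex} is a direct corollary of this result with $\gamma=1-p$, and your identity reproduces the paper's $(1-p)^2[H(0)+H(2)-2H(1)]$.
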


\begin{proof}
Let
\[
\Delta f(n):=f(n+1)-f(n),\qquad n\in\mathbb Z,
\]
denote the first forward difference of \(f\). Since \(f\) is discrete convex,
\(\Delta f(n)\) is nondecreasing in \(n\).

By construction,
\[
Z^{y+1}=Z^y+\xi_{y+1},\qquad y\in\mathbb N.
\]

Therefore,
\begin{align*}
F(y+1)-F(y)
&=
\mathbb E\bigl[f(Z^{y+1})-f(Z^y)\bigr] \\
&=
\mathbb E\bigl[f(Z^y+\xi_{y+1})-f(Z^y)\bigr] \\
&=
\gamma\,\mathbb E\bigl[f(Z^y+1)-f(Z^y)\bigr] \\
&=
\gamma\,\mathbb E\bigl[\Delta f(Z^y)\bigr].
\end{align*}

Since
\[
Z^{y+1}=Z^y+\xi_{y+1}\ge Z^y,
\]
and \(\Delta f\) is nondecreasing, we have
\[
\Delta f(Z^{y+1})\ge \Delta f(Z^y).
\]

Taking expectations yields
\[
\mathbb E[\Delta f(Z^{y+1})]
\ge
\mathbb E[\Delta f(Z^y)].
\]
Consequently,
\begin{align*}
F(y+2)-F(y+1)
&=
\gamma\,\mathbb E[\Delta f(Z^{y+1})] \\
&\ge
\gamma\,\mathbb E[\Delta f(Z^y)] \\
&=
F(y+1)-F(y).
\end{align*}
Thus the first forward difference of \(F\) is nondecreasing, and hence \(F\) is
discrete convex on \(y\).
\end{proof}









\section{Proof for Theorem \ref{theorem-K-convex}}\label{app:theorem-optimality}

\begin{proof}
We prove the result by backward induction. Throughout the proof, all functions are defined on
\(\mathbb{N}\), and the notion of \(K\)-convexity is understood on this discrete domain.

First, we show that if \(G_t\) is \(K\)-convex, then the corresponding value function \(C_t\) is also
\(K\)-convex and the optimal policy in period \(t\) has an \((s,S)\) structure. By
Lemma~\ref{lemma:K_properties}\ref{item-Kconvexity:d}, since \(G_t\) is \(K\)-convex, there exist
thresholds \(s_t\) and \(S_t\), with \(S_t\) being a global minimizer of \(G_t\), such that
\[
    G_t(S_t) \leq G_t(x), \qquad \forall x\in \mathbb{N},
\]
and
\[
    G_t(s_t) \leq G_t(S_t)+K.
\]
Moreover, for any \(s_t \leq x \leq z\),
\[
    G_t(x) \leq G_t(z)+K.
\]
For notational simplicity, we suppress the subscript \(t\) and write \(s\), \(S\), and \(G\).

Given a pre-hiring workforce level \(x\), either no hiring is made, in which case the cost is
\(G(x)-cx\), or a positive hiring quantity is chosen, in which case the fixed cost is incurred and
the optimal hire-up-to level is \(S\). Hence,
\[
    C_t(x)
    =
    \min\{G(x),\,G(S)+K\}-cx.
\]
Following the usual convention that no hiring is chosen at the boundary when the two actions are tied,
we can write
\begin{equation}
C_t(x) =
\begin{cases}
G(S)+K-cx, & x<s,\\
G(x)-cx, & x\geq s.
\end{cases}
\label{eq:CxGx_appendix}
\end{equation}


We now prove that \(C_t\) is \(K\)-convex. It suffices to verify that, for all \(x,a,b\in\mathbb{N}\)
such that \(x-b\in\mathbb{N}\),
\begin{equation}
    b\left(K+C_t(x+a)-C_t(x)\right)
    \geq
    a\left(C_t(x)-C_t(x-b)\right).
\label{ieq:Kconvex_appendix}
\end{equation}
The cases \(a=0\) or \(b=0\) are immediate, so assume \(a,b>0\). We distinguish four cases according
to the positions of \(x-b\), \(x\), and \(x+a\) relative to \(s\).

\paragraph{Case 1: \(x-b\geq s\).}
Then \(x-b\), \(x\), and \(x+a\) all lie in the region \(z\geq s\). By
\eqref{eq:CxGx_appendix},
\[
    C_t(z)=G(z)-cz
    \qquad \text{for all } z\in\{x-b,x,x+a\}.
\]
Since subtracting a linear function preserves \(K\)-convexity, and \(G\) is \(K\)-convex, inequality
\eqref{ieq:Kconvex_appendix} follows directly.

\paragraph{Case 2: \(x-b<x<s\) and \(x+a\geq s\).}
In this case,
\[
    C_t(x)=G(S)+K-cx,\qquad
    C_t(x-b)=G(S)+K-c(x-b),
\]
and
\[
    C_t(x+a)=G(x+a)-c(x+a).
\]
Substituting these expressions into \eqref{ieq:Kconvex_appendix}, the terms involving the linear cost
cancel, and the desired inequality reduces to
\[
    G(x+a)-G(S)\geq 0.
\]
This holds because \(S\) is a global minimizer of \(G\).

\paragraph{Case 3: \(x-b<s\leq x<x+a\).}
Here,
\[
    C_t(x)=G(x)-cx,\qquad
    C_t(x+a)=G(x+a)-c(x+a),
\]
whereas
\[
    C_t(x-b)=G(S)+K-c(x-b).
\]
After substituting these expressions into \eqref{ieq:Kconvex_appendix} and cancelling the linear
terms, it remains to prove
\begin{equation}
    b\left(K+G(x+a)-G(x)\right)
    \geq
    a\left(G(x)-G(S)-K\right).
\label{ieq:case3_target}
\end{equation}

Since \(G\) is \(K\)-convex, applying the definition with base point \(s\), forward step \(x+a-s\),
and backward step \(x-s\) gives
\[
    (x-s)\left(K+G(x+a)-G(x)\right)
    \geq
    a\left(G(x)-G(s)\right).
\]
By the definition of \(s\), we have \(G(s)\leq G(S)+K\). Therefore,
\[
    G(x)-G(s)
    \geq
    G(x)-G(S)-K.
\]
It follows that
\[
    (x-s)\left(K+G(x+a)-G(x)\right)
    \geq
    a\left(G(x)-G(S)-K\right).
\]
Furthermore, since \(x-b<s\), we have \(x-s<b\). Also, by
Lemma~\ref{lemma:K_properties}\ref{item-Kconvexity:d}, for \(s\leq x\leq x+a\),
\[
    G(x)\leq G(x+a)+K,
\]
and hence
\[
    K+G(x+a)-G(x)\geq 0.
\]
Thus, replacing \(x-s\) by the larger coefficient \(b\) preserves the inequality, yielding
\eqref{ieq:case3_target}. Hence \eqref{ieq:Kconvex_appendix} holds in this case.

\paragraph{Case 4: \(x+a<s\).}
Then \(x-b\), \(x\), and \(x+a\) all lie below \(s\). By \eqref{eq:CxGx_appendix},
\[
    C_t(z)=G(S)+K-cz
\]
on this range. Thus \(C_t\) is linear over the relevant points, and
\eqref{ieq:Kconvex_appendix} holds immediately.

Combining the four cases, we conclude that \(C_t\) is \(K\)-convex whenever \(G_t\) is \(K\)-convex.

It remains to establish the induction. In the terminal period \(T\), we have
\[
    G_T(x)=cx+L_T(x).
\]
The function \(L_T(x)\) is discretely convex by Lemma ~\ref{lemma:discrete-convex}. Hence it is
\(0\)-convex, and therefore \(K\)-convex by
Lemma~\ref{lemma:K_properties}\ref{item-Kconvexity:a}. The linear term \(cx\) is also
\(0\)-convex. By Lemma~\ref{lemma:K_properties}\ref{item-Kconvexity:b}, \(G_T\) is \(K\)-convex.
The argument above then implies that \(C_T\) is \(K\)-convex and that the optimal policy in period
\(T\) is of \((s,S)\) type.

Now suppose that \(C_{t+1}\) is \(K\)-convex for some \(t<T\). By the definition of \(G_t\),
\[
    G_t(y)
    =
    cy+L_t(y)+
    \sum_{k=0}^{y} C_{t+1}(y-k) f_y(k).
\]
Equivalently,
\[
    G_t(y)
    =
    cy+L_t(y)+\mathbb{E}\!\left[C_{t+1}(y-\omega_y)\right],
    \qquad
    \omega_y\sim \mathrm{Binomial}(y,p_t).
\]
The term \(cy\) is linear and hence \(0\)-convex. The function \(L_t(y)\) is discretely convex by
Lemma ~\ref{lemma:discrete-convex}, and hence \(0\)-convex. Since \(C_{t+1}\) is \(K\)-convex, Lemma
\ref{lemma:K_properties}\ref{item-Kconvexity:c} implies that
\(\mathbb{E}\!\left[C_{t+1}(y-\omega_y)\right]
\) is \(K\)-convex. Therefore, by Lemma~\ref{lemma:K_properties}\ref{item-Kconvexity:b}, \(G_t\) is
\(K\)-convex. Applying the first part of the proof again, \(C_t\) is \(K\)-convex and the optimal
policy in period \(t\) has an \((s,S)\) structure.

By backward induction, \(G_t\) and \(C_t\) are \(K\)-convex for all
\(t=1,\ldots,T\), and the optimal hiring policy in each period is an \((s,S)\) policy.
\end{proof}

\end{appendices}

\end{document}